\documentclass[11pt]{article}
\usepackage[utf8]{inputenc}
\usepackage[T1]{fontenc}
\usepackage{amsmath,amssymb,amsthm}
\usepackage[margin=2.6cm]{geometry}

\theoremstyle{plain}
\newtheorem{theorem}{Theorem}[section]
\newtheorem{proposition}[theorem]{Proposition}
\newtheorem{lemma}[theorem]{Lemma}
\newtheorem{corollary}[theorem]{Corollary}
\theoremstyle{definition}
\newtheorem{definition}[theorem]{Definition}
\theoremstyle{remark}
\newtheorem{remark}[theorem]{Remark}
\newtheorem{example}[theorem]{Example}
\newtheorem{note}[theorem]{Note}

\newcommand{\SDS}{\mathsf{SDS}}

\title{ Algebra of spectral duality structures}
\author{J.~A.~Vega Coso \\
        \small IUFFyM, Universidad de Salamanca \\
        \small Salamanca, Spain}
\date{}

\begin{document}
\maketitle

\begin{abstract}
This paper establishes the algebraic and categorical foundations of
spectral duality structures (SDS). We show that the class of all SDS
admits the structure of a graded monoidal category, where the degree
\(K\) is the fundamental invariant indexing the strata \(\SDS_K\).
We define operations of Cartesian product and disjoint union,
morphisms preserving involution and weights, and prove that
\(C^\ast=1/(1+\sqrt K)\) is a functor constant on each stratum.
We classify objects up to isomorphism by the combinatorial type
\((k,f)\), the degree \(K\), and a multiset of inversion classes
\([r]=\{r,r^{-1}\}\). Furthermore, we prove that the response rank
is exactly the number of non-trivial pairs \(k\), and we connect the
categorical structure with the Fisher--Rao geometry of Paper~V.
This work shows that the SDS, originally discovered in stochastic
resetting problems \cite{EvansMajumdarSchehr2020}, is an autonomous
mathematical object with a rich algebraic structure and a natural
geometric realisation.
\end{abstract}

\section{Introduction}

The first five papers of this series \cite{PaperI,PaperII,PaperIII,PaperIV,PaperV} studied, from different perspectives, a \emph{spectral duality structure} (SDS) associated with geometric resetting processes \cite{EvansMajumdarSchehr2020}. They showed that the SDS determines a universal invariant \(C^\ast\), a separatrix \(\Sigma\) of neutral distributions, and a rigid Fisher--Rao geometry on the distribution simplex. However, each of those works focused on a \emph{single} SDS (or on a parametrised family, such as the biased walk). The natural next step, and the one we take here, is to consider the \emph{class of all SDS} and ask: what operations close the class? How does the invariant behave under them? What global algebraic structure emerges?

We propose that the class of SDS admits a complete description as a \emph{graded monoidal category}. The invariant \(K\) (and hence \(C^\ast\)) provides a grading by the multiplicative group \(\mathbb R_{>0}\), so that the Cartesian product of two SDS of degrees \(K_1\) and \(K_2\) has degree \(K_1K_2\), while the disjoint union is only possible within the same degree. We define the natural morphisms between SDS as maps of sites that commute with the involutions and preserve the weights; we prove that these morphisms compose, that \(C^\ast\) is functorial, and that the resulting category has finite coproducts in each degree.Moreover, the degree yields a natural surjective homomorphism from
the Grothendieck group to the multiplicative group
\(\mathbb R_{>0}\). This gives an algebraic interpretation of the
invariant \(K\) as the \emph{degree character} of the category,
while the structure of the kernel remains an open problem.

A conceptual issue arises when defining these objects. A complete SDS includes, besides the sites \(Z\), the involution \(\sigma\), the weights \(\kappa\), and a system of spectral amplitudes \(\{A_\nu\}\) satisfying a full-rank condition. However, as we shall see, the algebraic operations, morphisms, grading, and invariant \(C^\ast\) depend \emph{only} on the triple \((Z,\sigma,\kappa)\), which we call the \emph{weighted dual skeleton}. The amplitudes \(\{A_\nu\}\) provide a non-degenerate spectral realisation of this skeleton, but do not enter the categorical structure. In fact, every weighted dual skeleton admits at least one canonical spectral realisation (taking the amplitudes as a basis of \(\mathbb R^{|Z|}\)), so the full-rank condition imposes no additional restrictions on the class of objects. This distinction is crucial because it separates the \emph{algebra of duality} (universal and combinatorial) from the \emph{spectral realisation} (analytic and model-dependent). The abstract SDS requires no probabilities or stochastic processes; it is a purely algebraic object that can be realised, among others, by Markovian resetting processes, but has a life of its own beyond them.

Paper~V showed that, when an SDS is realised probabilistically, it
induces a rigid geometry on the distribution simplex: the separatrix
\(\Sigma\) is a totally geodesic submanifold for the Fisher--Rao
metric. In the present paper we show that several of the structural
relations underlying this separatrix --- its orbit-ratio description,
its behaviour under suitable morphisms, and its compatibility with
Cartesian products --- admit a natural formulation at the level of
the abstract SDS. Thus the probabilistic Fisher--Rao geometry
provides a natural geometric realisation of part of the abstract
structure.

The paper is organised as follows. In Section 2 we fix notation and recall the definition of SDS, clearly distinguishing between the complete structure and its skeleton, and proving that every skeleton admits a spectral realisation. In Section 3 we prove the parametrisation of weights by \((K,r)\), which separates the invariant from the deformation directions. In Section 4 we introduce the operations of Cartesian product and disjoint union, and prove that the class is graded by \(K\), with coproducts in each fibre \(\SDS_K\). In Section 5 we define morphisms, prove that they form a category and that \(C^\ast\) is functorial. In Section 6 we study the symmetric monoidal structure, define the
Grothendieck group, and prove that the degree induces a surjective
homomorphism onto \(\mathbb R_{>0}\). In Section 7 we give a complete classification of SDS up to isomorphism. In Section 8 we prove that the response rank is exactly the number of non-trivial pairs \(k\). In Section 9 we connect the results with the Fisher--Rao geometry of Paper~V. Finally, in Section 10 we point out open directions.

\section{Definitions and notation}
\label{sec:definitions}

In this section we fix terminology and basic objects. We distinguish two levels: the \emph{weighted dual skeleton}, which contains purely combinatorial and algebraic data, and the \emph{complete spectral duality structure}, which adds amplitudes and the full-rank condition.

\begin{definition}[Weighted dual skeleton]
\label{def:skeleton}
A \emph{weighted dual skeleton} is a triple \((Z,\sigma,\kappa)\) where:
\begin{enumerate}
\item \(Z\) is a non-empty finite set, whose elements are called \emph{sites}.
\item \(\sigma:Z\to Z\) is an involution (\(\sigma^2=\mathrm{id}\)).
\item \(\kappa:Z\to\mathbb R_{>0}\) is a weight function satisfying
  \begin{equation}
  \kappa(z)\,\kappa(\sigma(z)) = K \quad \text{for all } z\in Z,
  \label{eq:skeleton}
  \end{equation}
  with \(K>0\) constant (independent of \(z\)).
\end{enumerate}
We call \(K\) the \emph{degree invariant} of the skeleton. The number of non-trivial pairs of \(\sigma\) is denoted by \(k\), and the number of fixed points by \(f\); thus \(|Z|=2k+f\).
\end{definition}

\begin{remark}
For a fixed point \(z=\sigma(z)\), condition \eqref{eq:skeleton} implies \(\kappa(z)^2=K\), and since \(\kappa(z)>0\), one necessarily has \(\kappa(z)=+\sqrt K\). This fact will be used repeatedly.
\end{remark}

\begin{definition}[Complete spectral duality structure (SDS)]
\label{def:sds}
A \emph{complete spectral duality structure} (SDS) is a pair \((\mathcal E, \{A_\nu\})\), where \(\mathcal E=(Z,\sigma,\kappa)\) is a weighted dual skeleton and \(\{A_\nu\}_{\nu=1}^N\) is a family of functions \(A_\nu:Z\to\mathbb R\) (the \emph{amplitudes}) satisfying:
\begin{enumerate}
\item The full-rank condition: the \(m=|Z|\) vectors
  \[
  \mathbf A(z)=\bigl(A_1(z),\dots,A_N(z)\bigr)\in\mathbb R^N,
  \qquad z\in Z,
  \]
  are linearly independent in \(\mathbb R^N\) (in particular \(N\ge m\)).
\item The amplitudes define, via the relation
  \begin{equation}
  B_\nu(z)=\kappa(z)\,A_\nu(\sigma(z)),
  \label{eq:amplitudes}
  \end{equation}
  the dual functions \(B_\nu\), which together with the \(A_\nu\) are used in the spectral representation of coupling functionals (see \cite{PaperIII} for details).
\end{enumerate}
\end{definition}

\begin{proposition}[Universal realizability of skeletons]
\label{prop:realizability}
Every weighted dual skeleton \((Z,\sigma,\kappa)\) admits at least one complete SDS. In particular, the existence of full-rank amplitudes imposes no additional restrictions on the class of objects we consider.
\end{proposition}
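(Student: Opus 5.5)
The plan is to give an explicit, canonical choice of amplitudes and check the two conditions of Definition~\ref{def:sds} directly. Let \((Z,\sigma,\kappa)\) be a weighted dual skeleton with \(m=|Z|\). Fix an enumeration \(Z=\{z_1,\dots,z_m\}\) and take \(N=m\). Define the amplitudes as the indicator functions of the sites:
\[
A_\nu(z_j)=\delta_{\nu j},\qquad \nu,j=1,\dots,m.
\]
Then \(\mathbf A(z_j)=e_j\), the \(j\)-th standard basis vector of \(\mathbb R^m\).

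First, the full-rank condition. The vectors \(\mathbf A(z_1),\dots,\mathbf A(z_m)\) are precisely \(e_1,\dots,e_m\). They are linearly independent in \(\mathbb R^N=\mathbb R^m\), and the inequality \(N\ge m\) holds with equality. More generally, any injective assignment \(z\mapsto\mathbf A(z)\) onto a linearly independent family in some \(\mathbb R^N\) with \(N\ge m\) would do. This shows the realisation is far from unique, though the unit-vector choice is the canonical one.

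Second, the dual functions. Condition~2 of Definition~\ref{def:sds} imposes no constraint; it only defines \(B_\nu\) through \eqref{eq:amplitudes}. With our choice this gives \(B_\nu(z)=\kappa(z)\,\delta_{\nu,\,j(\sigma(z))}\), where \(j(\cdot)\) denotes the index of a site. These are well-defined real functions on \(Z\), because \(\kappa\) is a function on \(Z\) and \(\sigma\) is a map \(Z\to Z\). No compatibility with \(K\) or with the fixed points of \(\sigma\) needs to be verified at this stage. In particular, the remark that \(\kappa(z)=\sqrt K\) at fixed points is not needed here.

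The only point requiring care, and the closest thing to an obstacle, is confirming that Definition~\ref{def:sds} places no further hidden requirement on the amplitudes, for instance a relation between \(\{A_\nu\}\) and \(\sigma\) or \(\kappa\) beyond the definition of \(B_\nu\). Reading the definition as stated, the only requirements are the full-rank condition and the defining formula \eqref{eq:amplitudes}, so the construction above suffices. The ``in particular'' clause then follows immediately. Every skeleton underlies some complete SDS, so restricting attention to skeletons that admit full-rank amplitudes excludes nothing.
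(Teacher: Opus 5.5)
Your proposal is correct and follows the paper's own proof: enumerate $Z$, take $N=m$ and $A_\nu(z_j)=\delta_{\nu j}$ so that the vectors $\mathbf A(z_j)=e_j$ are linearly independent, and observe that the $B_\nu$ are then simply defined by \eqref{eq:amplitudes} with no further constraint. Your explicit formula $B_\nu(z)=\kappa(z)\,\delta_{\nu,j(\sigma(z))}$ and your remark that the realisation is not unique are harmless additions.
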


\begin{proof}
Let \(m=|Z|\). Choose an enumeration \(Z=\{z_1,\dots,z_m\}\) and set \(N=m\), with
\[
A_\nu(z_j)=\delta_{\nu j}.
\]
Then the vectors \(\mathbf A(z_j)=e_j\in\mathbb R^m\) are linearly independent. The dual amplitudes are defined by \eqref{eq:amplitudes}, which is always possible. Hence every triple \((Z,\sigma,\kappa)\) admits a complete SDS.
\end{proof}

\begin{note}[Role of the amplitudes]
\label{note:amplitudes}
The amplitudes \(\{A_\nu\}\) and the full-rank condition are necessary for connecting the SDS to concrete spectral realisations (for instance, the eigenmodes of a Markov generator). However, as we shall prove in the following sections, the algebraic operations (product, sum, morphisms), the grading, and the invariant \(C^\ast\) depend \emph{only} on the skeleton \((Z,\sigma,\kappa)\). The amplitudes do not enter the definition of a morphism. The compatibility of a morphism between skeletons with specific spectral realisations, when required, would need additional data and is not incorporated into the basic category defined in this paper. Therefore, the category \(\SDS\) we study has as objects all weighted dual skeletons, and amplitudes are considered auxiliary structures that, when they exist, provide a non-degenerate realisation.
\end{note}

\begin{definition}[Universal invariant]
\label{def:Cstar}
Given a weighted dual skeleton \((Z,\sigma,\kappa)\) of degree \(K\), we define the \emph{universal invariant}
\[
C^\ast = \frac{1}{1+\sqrt K}.
\]
By the previous remark, \(C^\ast\) is well defined and depends only on \(K\). When the SDS is realised probabilistically, \(C^\ast\) is the value of the ruin probability at neutral distributions (see \cite{PaperIII}).
\end{definition}

\begin{definition}[Orbit ratios]
\label{def:ratios}
For each site \(z\in Z\), we define the \emph{orbit ratio}
\[
r_z = \left(\frac{\kappa(\sigma(z))}{\kappa(z)}\right)^{1/2}.
\]
One immediately checks that
\[
r_{\sigma(z)} = r_z^{-1}.
\]
Thus a non-trivial orbit is parametrised by a single positive number \(r_O\), once one of its two elements is chosen as representative. At a fixed point, \(r_z=1\).
\end{definition}

\begin{remark}[Grading and fibration]
\label{rem:fibration}
The definition of the skeleton clearly separates two types of data:
\begin{itemize}
\item The invariant \(K\) (or equivalently \(C^\ast\)) is a global scalar that \emph{grades} the class: as we shall see in Section 5, two skeletons with different \(K\) cannot be connected by a morphism (the proof is immediate from weight preservation).
\item The orbit ratios \(r_z\) parametrise, for each fixed \(K\), the different ways of distributing the weight \(\kappa\) within each pair. They are thus coordinates on the \emph{fibre} over \(K\).
\end{itemize}
This dichotomy is the basis of the whole categorical structure we develop.
\end{remark}

\begin{note}[On notation]
In what follows, unless explicitly stated otherwise, we write \(\mathcal S\) for a complete SDS and \(\mathcal E=(Z,\sigma,\kappa)\) for its skeleton. When we say ``the category \(\SDS\)'', we refer to the category whose objects are weighted dual skeletons and whose morphisms are maps preserving \(\sigma\) and \(\kappa\) (defined in Section 5). This choice is justified by Proposition~\ref{prop:realizability} and Note~\ref{note:amplitudes}.
\end{note}

\section{Parametrisation by \((K,r)\) and stratification}
\label{sec:parametrisation}

In the previous section we defined the weighted dual skeleton
\((Z,\sigma,\kappa)\) and saw that its degree \(K\) and its orbit
ratios \(r_z\) are fundamental data. We now prove that, in fact,
these two objects completely determine the weight \(\kappa\), and the
correspondence is bijective once a choice of representatives for the
non-trivial orbits is fixed. This cleanly separates the
\emph{global invariant} \(K\) from the \emph{internal coordinates}
of each orbit, laying the foundations for the graded structure
developed in the following sections.

\begin{proposition}[Parametrisation of weights]
\label{prop:param}
Fix a finite set \(Z\), an involution \(\sigma\), a choice of
representatives for the non-trivial orbits, and a number \(K>0\).
Let \((Z,\sigma,\kappa)\) be a weighted dual skeleton of degree \(K\).
For each non-trivial orbit \(O\), with representative \(z_O\), define
\[
r_O := r_{z_O} = \sqrt{\frac{\kappa(\sigma(z_O))}{\kappa(z_O)}}.
\]
Then for each non-trivial orbit \(O=\{z_O,\sigma(z_O)\}\), the weight
\(\kappa\) is determined by \(K\) and \(r_O\) via
\begin{equation}
\kappa(z_O) = \frac{\sqrt K}{r_O}, \qquad
\kappa(\sigma(z_O)) = \sqrt K \, r_O.
\label{eq:kappaKr}
\end{equation}
At a fixed point, one necessarily has \(\kappa(z)=\sqrt K\) and
\(r_z=1\). The map
\[
\kappa \longmapsto (r_O)_{O \text{ non-trivial orbit}}
\]
is a bijection between the set of weights of degree \(K\) on \(Z\)
and the product \((\mathbb R_{>0})^k\), where \(k\) is the number of
non-trivial pairs of \(\sigma\).
\end{proposition}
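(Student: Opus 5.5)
The plan is to solve the two defining relations of a non-trivial orbit explicitly, and then exhibit an inverse map. Fix a non-trivial orbit \(O=\{z_O,\sigma(z_O)\}\) and abbreviate \(a=\kappa(z_O)\), \(b=\kappa(\sigma(z_O))\), both positive. The skeleton condition \eqref{eq:skeleton} gives \(ab=K\). The definition of \(r_O\) gives \(r_O^2=b/a\). Multiplying these relations yields \(b^2=K r_O^2\), and dividing yields \(a^2=K/r_O^2\). Positivity of \(a\), \(b\), \(K\) and \(r_O\) lets us take positive square roots, which gives exactly \eqref{eq:kappaKr}. The fixed-point statement is the earlier Remark: \(\kappa(z)^2=K\) forces \(\kappa(z)=\sqrt K\), and then \(r_z=(\kappa(z)/\kappa(z))^{1/2}=1\).

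For the bijection, consider the map \(\Phi:\kappa\mapsto(r_O)_O\). Its values lie in \((\mathbb R_{>0})^k\), since each \(r_O\) is the positive root of a ratio of positive numbers.

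\emph{Injectivity.} By \eqref{eq:kappaKr}, \(\kappa\) is recovered on every non-trivial orbit from \(K\) and \(r_O\). By the Remark, it is forced on fixed points. Since \(Z\) is the disjoint union of the \(\sigma\)-orbits, two weights of degree \(K\) with the same image agree everywhere.

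\emph{Surjectivity.} Given \((r_O)\in(\mathbb R_{>0})^k\), define \(\kappa\) by \eqref{eq:kappaKr} on each non-trivial orbit and by \(\kappa(z)=\sqrt K\) on fixed points. This is well defined because the orbits partition \(Z\) and the representatives are fixed. We then check three things:
\begin{itemize}
\item \(\kappa>0\) everywhere;
\item \(\kappa(z)\kappa(\sigma z)=K\) for all \(z\), which holds on a pair since \((\sqrt K/r_O)(\sqrt K r_O)=K\) (for both \(z=z_O\) and \(z=\sigma(z_O)\), by symmetry of the product) and at a fixed point since \(\sqrt K\cdot\sqrt K=K\);
\item the orbit ratio of this \(\kappa\) at \(z_O\) is \(\sqrt{\sqrt K r_O/(\sqrt K/r_O)}=r_O\).
\end{itemize}
So \(\Phi(\kappa)=(r_O)\), and \(\Phi\) is a bijection.

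No step is a genuine obstacle. The only points needing care are bookkeeping ones. The first is the dependence on the chosen representatives: swapping a representative replaces \(r_O\) by \(r_O^{-1}\), consistent with \(r_{\sigma(z)}=r_z^{-1}\) from Definition~\ref{def:ratios}. This is why the statement fixes the representatives. The second is to use positivity explicitly when extracting square roots, so that the sign is determined.
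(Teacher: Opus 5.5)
Your proposal is correct and follows essentially the same route as the paper: you solve the two relations \(\kappa(z_O)\kappa(\sigma(z_O))=K\) and \(\kappa(\sigma(z_O))/\kappa(z_O)=r_O^2\) using positivity to fix the roots, force \(\kappa=\sqrt K\) at fixed points, and build the inverse map from formula \eqref{eq:kappaKr}. The only differences are cosmetic: you multiply and divide the relations where the paper substitutes, and you write out injectivity and surjectivity separately where the paper gives a single converse construction.
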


\begin{proof}
Fix a non-trivial orbit \(O=\{z_O,\sigma(z_O)\}\). The two equations
that the pair \((\kappa(z_O),\kappa(\sigma(z_O)))\) must satisfy are:
\begin{equation}
\kappa(z_O)\,\kappa(\sigma(z_O)) = K,
\qquad
\frac{\kappa(\sigma(z_O))}{\kappa(z_O)} = r_O^2.
\label{eq:system}
\end{equation}
From the second, \(\kappa(\sigma(z_O)) = r_O^2\,\kappa(z_O)\).
Substituting into the first:
\[
\kappa(z_O)^2 \, r_O^2 = K \quad\Longrightarrow\quad
\kappa(z_O) = \frac{\sqrt K}{r_O},
\]
since both factors are positive. Then
\(\kappa(\sigma(z_O)) = r_O^2 \cdot \frac{\sqrt K}{r_O} = \sqrt K\, r_O\).
Thus the pair is uniquely determined by \(K\) and \(r_O\).

For a fixed point \(z_0=\sigma(z_0)\), the condition
\(\kappa(z_0)^2=K\) and positivity give \(\kappa(z_0)=\sqrt K\), and
then \(r_{z_0}=1\). Hence there is no additional freedom.

Conversely, given a collection of positive numbers \(\{r_O\}\), one
for each non-trivial orbit, formula \eqref{eq:kappaKr} together with
\(\kappa(z_0)=\sqrt K\) for fixed points defines a weight
\(\kappa\) that automatically satisfies
\(\kappa(z)\kappa(\sigma(z))=K\) and has the prescribed orbit ratios.
The correspondence is therefore bijective.
\end{proof}

\begin{remark}[Choice of representatives]
\label{rem:representatives}
In the proposition above we fixed a representative \(z_O\) for each
non-trivial orbit. If the other site is chosen, \(r_O\) is replaced
by its inverse, since \(r_{\sigma(z_O)} = r_O^{-1}\). The assignment
of weights to the two sites of the orbit is the same; only the order
of the two components is interchanged. Hence the intrinsic data of
an orbit is the equivalence class \([r_O] = \{r_O, r_O^{-1}\}\).
In what follows, we assume a choice of representatives for each
orbit, although the results are independent of it.
\end{remark}

\begin{remark}[Stratification by degree]
\label{rem:stratification}
The proposition above has a very clear structural reading. Define the
\emph{degree map}
\[
\deg:\{\text{skeletons}\}\longrightarrow \mathbb R_{>0},
\qquad \deg(Z,\sigma,\kappa)=K.
\]
This map decomposes the class of skeletons into \emph{strata}
\[
\SDS_K := \{(Z,\sigma,\kappa): \deg(Z,\sigma,\kappa)=K\}.
\]
Each stratum \(\SDS_K\) has the following structure:
\begin{itemize}
\item The universal invariant \(C^\ast = 1/(1+\sqrt K)\) is constant
  on \(\SDS_K\).
\item The orbit ratios \(r_O\) are coordinates within each stratum:
  they parametrise the different ways of distributing the weight
  \(\kappa\) between the two sites of each pair, keeping the product
  \(K\) fixed.
\end{itemize}
This separation of variables is the basis of the entire categorical
structure we develop: \textbf{\(K\) grades, \(r\) coordinates the
stratum}.
\end{remark}

\begin{corollary}[Structure of the stratum with fixed \((Z,\sigma)\)]
\label{cor:strata}
Given a finite set \(Z\), an involution \(\sigma\), and a choice of
representatives for the non-trivial orbits, the set of weights
\(\kappa\) satisfying \(\kappa(z)\kappa(\sigma(z))=K\) is in
bijection with \((\mathbb R_{>0})^k\), where \(k\) is the number of
non-trivial pairs of \(\sigma\). Hence the stratum \(\SDS_K\)
decomposes as a disjoint union of these subsets, indexed by the
combinatorial type \((Z,\sigma)\).
\end{corollary}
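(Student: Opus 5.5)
The first assertion of Corollary~\ref{cor:strata} is the bijection part of Proposition~\ref{prop:param}, now stated for fixed $(Z,\sigma)$ and fixed representatives. I will invoke that proposition directly. The map $\kappa\mapsto(r_O)_O$, with $r_O=\sqrt{\kappa(\sigma(z_O))/\kappa(z_O)}$, sends the set
\[
W_K(Z,\sigma):=\{\kappa:Z\to\mathbb R_{>0} \;:\; \kappa(z)\kappa(\sigma(z))=K \ \text{for all } z\in Z\}
\]
to $(\mathbb R_{>0})^k$. Its inverse is given by formula \eqref{eq:kappaKr} on the non-trivial orbits, together with $\kappa=\sqrt K$ on fixed points. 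The fixed points contribute no coordinates, by the Remark following Definition~\ref{def:skeleton}. So the target has exactly $k$ factors, one for each non-trivial pair, and $f$ plays no role.

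For the decomposition of $\SDS_K$, I will use that a skeleton is the triple $(Z,\sigma,\kappa)$. Every element of $\SDS_K$ therefore lies in exactly one subset
\[
\SDS_K(Z,\sigma):=\{(Z,\sigma,\kappa):\kappa\in W_K(Z,\sigma)\},
\]
namely the one indexed by its own underlying pair. This gives
\[
\SDS_K=\bigsqcup_{(Z,\sigma)}\SDS_K(Z,\sigma).
\]
The union is disjoint because distinct pairs $(Z,\sigma)$ give distinct triples. Each piece is in bijection with $W_K(Z,\sigma)\cong(\mathbb R_{>0})^k$ via $\kappa$, by the first part.

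The one point needing care is what ``indexed by the combinatorial type $(Z,\sigma)$'' means. Taken literally, indexing by actual pairs $(Z,\sigma)$ ranges over a proper class, but the statement is a set-theoretic disjoint-union claim within the class of skeletons, so this is harmless. If instead one wants to index by isomorphism type $(k,f)$, I would add one remark. A bijection $\phi:Z\to Z'$ with $\phi\sigma=\sigma'\phi$ induces $W_K(Z',\sigma')\to W_K(Z,\sigma)$, $\kappa'\mapsto\kappa'\circ\phi$, and after transporting representatives this map is compatible with the coordinates $r_O$. So the parametrisation depends only on the type. Remark~\ref{rem:representatives} shows that changing representatives acts by $r_O\mapsto r_O^{-1}$ on some coordinates, which is a self-bijection of $(\mathbb R_{>0})^k$. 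The bijection is therefore canonical up to this action, and the conclusion holds for any choice of representatives. I expect no genuine obstacle: the main work is already contained in Proposition~\ref{prop:param}.
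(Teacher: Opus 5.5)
Your proposal is correct and takes the paper's route. The paper gives no separate proof: it treats the corollary as an immediate restatement, for fixed $(Z,\sigma)$, of the bijection in Proposition~\ref{prop:param}, together with the tautological partition of $\SDS_K$ by underlying pair $(Z,\sigma)$. That is exactly your argument, and your extra remark about transporting the parametrisation along $\sigma$-equivariant bijections is correct but more than the statement needs.
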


\begin{remark}[Data relevant for classification up to isomorphism]
\label{rem:classification-data}
Up to isomorphism of skeletons, the type of an object of degree \(K\)
is determined by:
\begin{enumerate}
\item The combinatorial type \((k,f)\), where \(k\) is the number of
  non-trivial pairs and \(f\) the number of fixed points (with
  \(2k+f=|Z|\)).
\item A multiset of equivalence classes \(\{[r_1],\ldots,[r_k]\}\),
  where each \([r_i] = \{r_i, r_i^{-1}\}\) is the inversion class of
  a non-trivial orbit.
\end{enumerate}
This notation absorbs both the inversions \(r_i \leftrightarrow r_i^{-1}\)
and the permutations of orbits. The precise classification is
developed in Section 7.
\end{remark}

\begin{remark}[Consequence for category theory]
\label{rem:catfibre}
The bijection of Proposition~\ref{prop:param} implies that a morphism
between skeletons (which preserves \(\kappa\)) must preserve \(K\).
For the orbits, preservation of the involution and of the weights
imposes direct restrictions on the action of a morphism. For example,
if a morphism \(\phi:Z\to Z'\) identifies two sites of the same
non-trivial orbit, this is only possible when \(r_O=1\). Indeed, if
\(\phi(z)=\phi(\sigma(z))\), compatibility with the involutions,
\(\phi(\sigma(z))=\sigma'(\phi(z))\), implies that \(\phi(z)\) is a
fixed point of \(\sigma'\). Weight preservation,
\(\kappa'(\phi(z))=\kappa(z)\), then forces
\(\kappa(z)=\kappa(\sigma(z))\), hence \(r_O=1\). The general
description of these compatibilities, including non-injective cases,
is analysed in detail in Section 5, where we define morphisms and
prove that \(C^\ast\) is functorial.
\end{remark}

\begin{remark}[Structural decomposition and auxiliary role of the spectral realisation]
\label{rem:decomposition}
Section 2 and the present section reveal that the intrinsic SDS
admits a decomposition into two layers:
\[
\boxed{
\text{Intrinsic SDS}
=
\underbrace{(Z,\sigma)}_{\text{combinatorial structure}}
+
\underbrace{(K,(r_O))}_{\text{weight data}}
}
\]
where:
\begin{itemize}
\item The \emph{combinatorial structure} \((Z,\sigma)\) is purely
  discrete and determines the number of orbits and their type.
\item The \emph{weight data} \((K,(r_O))\) include the global degree
  \(K\) (which is invariant) and the internal coordinates \(r_O\)
  (which parametrise the stratum).
\end{itemize}
The spectral realisation \(\{A_\nu\}\), on the other hand, is an
auxiliary datum:
\[
\boxed{
\text{auxiliary spectral realisation } \{A_\nu\}
}
\]
which provides a connection to concrete models (such as the
eigenmodes of a Markov generator) but does not enter the definitions
of morphisms or operations, as will be shown in the following
sections. This separation is fundamental: the SDS as a categorical
object is independent of the choice of amplitudes, as long as at
least one realisation exists (which, by Proposition~\ref{prop:realizability},
always does).
\end{remark}

\section{Operations on the class}
\label{sec:operations}

In this section we define the two fundamental operations that close
the class of weighted dual skeletons: the Cartesian product and the
disjoint union. We prove that both are internal and that, together
with the degree map \(\deg\), they endow the class with a graded
structure that will be the basis of the monoidal category developed
in the following sections.

\begin{definition}[Cartesian product of skeletons]
\label{def:product}
Given two skeletons \(\mathcal E_1=(Z_1,\sigma_1,\kappa_1)\) and
\(\mathcal E_2=(Z_2,\sigma_2,\kappa_2)\), we define their
\emph{Cartesian product}
\[
\mathcal E_1 \times \mathcal E_2 = (Z_1 \times Z_2,\; \sigma_1 \times \sigma_2,\; \kappa_1 \otimes \kappa_2),
\]
where:
\begin{enumerate}
\item The involution on the product is the map
  \[
  (\sigma_1 \times \sigma_2)(z,w) = (\sigma_1(z), \sigma_2(w)).
  \]
\item The weight is the pointwise product
  \[
  (\kappa_1 \otimes \kappa_2)(z,w) = \kappa_1(z)\,\kappa_2(w).
  \]
\end{enumerate}
\end{definition}

\begin{proposition}[The product is internal and computes degree and ratios]
\label{prop:product}
The Cartesian product of two skeletons is again a weighted dual
skeleton. Moreover:
\begin{enumerate}
\item The degree is multiplicative:
  \[
  \deg(\mathcal E_1 \times \mathcal E_2) = K_1 K_2,
  \qquad K_i = \deg(\mathcal E_i).
  \]
\item The orbit ratios of the product are the products of the ratios:
  \[
  r_{(z,w)} = r_z \, r_w.
  \]
\item The universal invariant \(C^\ast\) is additive in logarithms:
  \[
  \operatorname{logit} C^\ast(\mathcal E_1 \times \mathcal E_2)
  = \operatorname{logit} C^\ast(\mathcal E_1)
    + \operatorname{logit} C^\ast(\mathcal E_2),
  \qquad
  \operatorname{logit} C^\ast = -\frac12 \log K.
  \]
  Equivalently, in terms of probabilities:
  \[
  C^\ast_{12}
  =
  \frac{C^\ast_1 C^\ast_2}
       {C^\ast_1 C^\ast_2 + (1-C^\ast_1)(1-C^\ast_2)}.
  \]
\end{enumerate}
\end{proposition}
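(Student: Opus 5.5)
The plan is to verify the axioms of Definition~\ref{def:skeleton} for the triple \((Z_1\times Z_2,\sigma_1\times\sigma_2,\kappa_1\otimes\kappa_2)\) directly, then read off items 1--3 by short computations.

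\textbf{The product is a skeleton.} First, \(Z_1\times Z_2\) is finite and non-empty, since both factors are. Next, \((\sigma_1\times\sigma_2)^2(z,w)=(\sigma_1^2 z,\sigma_2^2 w)=(z,w)\), so the map is an involution. The weight \(\kappa_1(z)\kappa_2(w)\) is positive. The key identity is
\[
(\kappa_1\otimes\kappa_2)(z,w)\,(\kappa_1\otimes\kappa_2)(\sigma_1 z,\sigma_2 w)
=\bigl[\kappa_1(z)\kappa_1(\sigma_1 z)\bigr]\bigl[\kappa_2(w)\kappa_2(\sigma_2 w)\bigr]=K_1K_2 .
\]
It is obtained by regrouping factors and applying \eqref{eq:skeleton} in each factor. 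This shows that the product is a skeleton and simultaneously proves item~1, since the constant \(K_1K_2\) does not depend on \((z,w)\).

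\textbf{Orbit ratios.} For item~2 I apply Definition~\ref{def:ratios} pointwise:
\[
r_{(z,w)}^2=\frac{\kappa_1(\sigma_1 z)\kappa_2(\sigma_2 w)}{\kappa_1(z)\kappa_2(w)}=r_z^2r_w^2 ,
\]
and take positive square roots. I will point out that this is a statement about \emph{sites}, not about chosen orbit representatives. It is therefore insensitive to how the orbit structure of the product decomposes, which is the one place some care is needed. For example, a pair formed from a fixed point and a non-trivial orbit gives a non-trivial orbit with ratio \(r_w\). Two non-trivial orbits give two product orbits, with ratios \(r_zr_w\) and \(r_zr_w^{-1}\) once representatives are chosen. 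A fixed point paired with a fixed point gives a fixed point with ratio \(1\). All of these cases are consistent with \(r_{\sigma(z)}=r_z^{-1}\).

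\textbf{Additivity of the logit.} For item~3, with \(\operatorname{logit}p=\log\frac{p}{1-p}\), Definition~\ref{def:Cstar} gives \(\frac{C^\ast}{1-C^\ast}=\frac{1}{\sqrt K}\). Hence \(\operatorname{logit}C^\ast=-\tfrac12\log K\). Additivity then follows from item~1, because \(\log(K_1K_2)=\log K_1+\log K_2\). For the probabilistic form, I exponentiate to obtain the multiplicativity of odds,
\[
\frac{C^\ast_{12}}{1-C^\ast_{12}}=\frac{C^\ast_1C^\ast_2}{(1-C^\ast_1)(1-C^\ast_2)},
\]
and solve this linear equation for \(C^\ast_{12}\), which yields the displayed formula. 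Alternatively, I can substitute \(C^\ast_i=1/(1+\sqrt{K_i})\) directly and check that the right-hand side equals \(1/(1+\sqrt{K_1K_2})\).

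None of these steps is a real obstacle. The only point requiring attention is the orbit bookkeeping in item~2, which I avoid by working with site-wise ratios.
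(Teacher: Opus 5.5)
Your proposal is correct and follows essentially the same route as the paper: regroup the factors to get the constant \(K_1K_2\), compute \(r_{(z,w)}^2\) site-wise and take positive roots, then derive the \(C^\ast\) statement from multiplicativity of the odds \(C^\ast/(1-C^\ast)=1/\sqrt K\). Your extra checks (that \(\sigma_1\times\sigma_2\) is an involution, that \(Z_1\times Z_2\) is non-empty) and your orbit case analysis, which the paper defers to a later remark, are harmless additions to the same argument.
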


\begin{proof}
We verify the skeleton condition. For every \((z,w)\in Z_1\times Z_2\),
\[
(\kappa_1 \otimes \kappa_2)(z,w)\,
(\kappa_1 \otimes \kappa_2)(\sigma_1(z),\sigma_2(w))
=
\kappa_1(z)\kappa_1(\sigma_1(z))\,\kappa_2(w)\kappa_2(\sigma_2(w))
= K_1 K_2,
\]
which is constant. Hence \(\mathcal E_1\times\mathcal E_2\) is a
skeleton of degree \(K_1K_2\).

For the orbit ratios, a direct computation gives:
\[
r_{(z,w)}^2
=
\frac{(\kappa_1\otimes\kappa_2)(\sigma_1(z),\sigma_2(w))}
     {(\kappa_1\otimes\kappa_2)(z,w)}
=
\frac{\kappa_1(\sigma_1(z))}{\kappa_1(z)}
\frac{\kappa_2(\sigma_2(w))}{\kappa_2(w)}
= r_z^2\, r_w^2.
\]
Taking positive square roots, \(r_{(z,w)} = r_z r_w\). This formula
holds for every site \((z,w)\), regardless of whether each factor is
in a non-trivial orbit or a fixed point (for which \(r=1\)).

Finally, the formula for \(C^\ast\) follows from
\(C^\ast = 1/(1+\sqrt K)\) and the multiplicativity of \(K\):
\[
\frac{C^\ast_{12}}{1-C^\ast_{12}} = \frac{1}{\sqrt{K_1K_2}}
= \frac{1}{\sqrt{K_1}}\frac{1}{\sqrt{K_2}}
= \frac{C^\ast_1}{1-C^\ast_1}\frac{C^\ast_2}{1-C^\ast_2},
\]
which is the identity in odds. Solving gives the explicit formula.
\end{proof}

\begin{definition}[Disjoint union of skeletons]
\label{def:sum}
Given two skeletons \(\mathcal E_1=(Z_1,\sigma_1,\kappa_1)\) and
\(\mathcal E_2=(Z_2,\sigma_2,\kappa_2)\), we define their
\emph{disjoint union}
\[
\mathcal E_1 \sqcup \mathcal E_2 = (Z_1 \sqcup Z_2,\; \sigma_1 \sqcup \sigma_2,\; \kappa_1 \sqcup \kappa_2),
\]
where:
\begin{enumerate}
\item The set of sites is the disjoint union.
\item The involution acts as \(\sigma_i\) on each component.
\item The weight is \(\kappa_i\) on each component.
\end{enumerate}
\end{definition}

\begin{proposition}[The disjoint union is internal only in fixed degree]
\label{prop:sum}
The disjoint union \(\mathcal E_1 \sqcup \mathcal E_2\) is a weighted
dual skeleton if and only if
\[
\deg(\mathcal E_1) = \deg(\mathcal E_2).
\]
In that case, the common degree is \(K\), and the orbit ratios are
those of each component.
\end{proposition}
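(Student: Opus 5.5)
The plan is to check the defining condition \eqref{eq:skeleton} of Definition~\ref{def:skeleton} directly on \(Z_1\sqcup Z_2\). The key structural observation comes first: the involution \(\sigma_1\sqcup\sigma_2\) maps each component \(Z_i\) into itself. So every orbit of the union lies entirely inside \(Z_1\) or entirely inside \(Z_2\), and no pair \((z,\sigma(z))\) ever straddles the two components. For a site \(z\in Z_i\), the product \((\kappa_1\sqcup\kappa_2)(z)\,(\kappa_1\sqcup\kappa_2)((\sigma_1\sqcup\sigma_2)(z))\) therefore equals \(\kappa_i(z)\kappa_i(\sigma_i(z))=K_i\). The union is thus a function \(Z_1\sqcup Z_2\to\mathbb R_{>0}\) whose pairwise products take the value \(K_1\) on \(Z_1\) and \(K_2\) on \(Z_2\). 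Finiteness and non-emptiness of \(Z_1\sqcup Z_2\) and the involution property of \(\sigma_1\sqcup\sigma_2\) are immediate, because each piece is an involution on its own component.

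For the ``if'' direction, assume \(K_1=K_2=K\). The computation above then gives the constant value \(K\) at every site, so the union is a skeleton of degree \(K\).

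For the ``only if'' direction, assume the union is a skeleton of some degree \(K\). Both \(Z_1\) and \(Z_2\) are non-empty by Definition~\ref{def:skeleton}, so we may evaluate at some \(z_1\in Z_1\) and some \(z_2\in Z_2\). This gives \(K=K_1\) and \(K=K_2\), hence \(K_1=K_2\). This is the only point where non-emptiness is really used, and I expect it to be the sole subtlety: an empty component would impose no constraint. It is excluded by the definition.

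Finally, for the orbit ratios, take \(z\in Z_i\). Since both \(\sigma\) and \(\kappa\) restrict to \(\sigma_i\) and \(\kappa_i\) on \(Z_i\), Definition~\ref{def:ratios} gives \(r_z=(\kappa_i(\sigma_i(z))/\kappa_i(z))^{1/2}\), which is the ratio computed in \(\mathcal E_i\). The non-trivial orbits of the union are exactly the disjoint union of those of \(\mathcal E_1\) and \(\mathcal E_2\), with the same representatives, and so the parameters \((r_O)\) of Proposition~\ref{prop:param} are simply concatenated. No step here is a genuine obstacle. The whole argument rests on the invariance of each component under the involution.
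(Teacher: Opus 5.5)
Your proposal is correct and follows the paper's argument: on each $\sigma$-invariant component $Z_i$ the product $\kappa(z)\kappa(\sigma(z))$ equals $K_i$, so it is constant on $Z_1\sqcup Z_2$ exactly when $K_1=K_2$, and the orbit ratios are those of the components because orbits do not mix. Your explicit use of the non-emptiness of each $Z_i$ in the ``only if'' direction spells out a step the paper leaves implicit.
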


\begin{proof}
The skeleton condition requires that
\(\kappa(z)\kappa(\sigma(z))\) be constant on \(Z_1\sqcup Z_2\).
On the first component this constant is \(K_1\); on the second,
\(K_2\). For a global constant, it is necessary and sufficient that
\(K_1=K_2\). If this holds, the componentwise defined weight
satisfies the condition. The orbit ratios are those of each
component, since orbits do not mix in the disjoint union.
\end{proof}

\begin{corollary}[Graded structure of the class]
\label{cor:grading}
The class of weighted dual skeletons has a graded structure
compatible with the two operations:
\begin{enumerate}
\item The degree map \(\deg\) decomposes the class into
  \emph{strata} \(\SDS_K = \{\mathcal E: \deg(\mathcal E)=K\}\).
\item The Cartesian product is a monoidal operation connecting
  strata:
  \[
  \times: \SDS_{K_1} \times \SDS_{K_2} \longrightarrow \SDS_{K_1K_2}.
  \]
  It is associative and commutative (up to natural isomorphisms), and
  has a unit: the skeleton \(\mathcal I\) on a single fixed point
  with \(K=1\).
\item The disjoint union is an internal operation in each stratum:
  \[
  \sqcup: \SDS_K \times \SDS_K \longrightarrow \SDS_K,
  \]
  which provides a coproduct within each stratum (as will be seen in
  Section 5).
\item The universal invariant \(C^\ast\) is additive in logarithms
  under the product:
  \[
  \operatorname{logit} C^\ast(\mathcal E_1\times\mathcal E_2)
  = \operatorname{logit} C^\ast(\mathcal E_1)
    + \operatorname{logit} C^\ast(\mathcal E_2).
  \]
\end{enumerate}
In summary, the class of skeletons is a \emph{graded monoidal
category} (formalised in Section 6), where the degree \(K\) is the
invariant indexing the strata and the Cartesian product is the
tensor product.
\end{corollary}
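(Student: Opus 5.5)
The plan is to take the four items of Corollary~\ref{cor:grading} one at a time. Each reduces to Propositions~\ref{prop:product} and~\ref{prop:sum} together with a few explicit bijections of sites. Morphisms are only formally introduced in Section~5, so for the statements ``up to natural isomorphism'' I will use the working notion of a bijection of sites that intertwines the involutions and preserves the weights. That this notion is an isomorphism in the category will be confirmed once morphisms are defined.

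Item~1 is a definition. It is consistent because every skeleton has a well-defined degree: the constant \(K\) in \eqref{eq:skeleton} is unique, since \(Z\) is non-empty.

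For item~2, Proposition~\ref{prop:product} already shows that \(\times\) maps \(\SDS_{K_1}\times\SDS_{K_2}\) into \(\SDS_{K_1K_2}\). For associativity I use the canonical bijection \(\alpha:(Z_1\times Z_2)\times Z_3\to Z_1\times(Z_2\times Z_3)\), \(((z,w),u)\mapsto(z,(w,u))\). It intertwines \((\sigma_1\times\sigma_2)\times\sigma_3\) with \(\sigma_1\times(\sigma_2\times\sigma_3)\) componentwise, and it preserves weights because both sides equal \(\kappa_1(z)\kappa_2(w)\kappa_3(u)\) by associativity of multiplication in \(\mathbb R_{>0}\). For commutativity the swap \((z,w)\mapsto(w,z)\) works in the same way, using commutativity of \(\mathbb R_{>0}\).

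For the unit, \(\mathcal I=(\{\ast\},\mathrm{id},\kappa\equiv1)\) is a skeleton of degree \(1\), since \(1\cdot1=1\). The projection \(Z\times\{\ast\}\to Z\) is a bijection that intertwines \(\sigma\times\mathrm{id}\) with \(\sigma\) and preserves \(\kappa(z)\cdot1=\kappa(z)\); the left unit is handled symmetrically. Naturality means these maps commute with products of weight- and involution-preserving maps. This holds because they are the set-theoretic structure maps, which are natural in the underlying sets, and the weight and involution conditions hold pointwise.

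Item~3 follows directly from Proposition~\ref{prop:sum}. With \(K_1=K_2=K\) the union is a skeleton of degree \(K\), so \(\sqcup\) is internal to \(\SDS_K\). The coproduct universal property is deferred, as the statement itself says, to Section~5. There the check will be that the inclusions preserve \(\sigma\) and \(\kappa\), and that a pair of morphisms \(\phi_i:\mathcal E_i\to\mathcal E'\) glues uniquely to a map on \(Z_1\sqcup Z_2\) which still preserves \(\sigma\) and \(\kappa\), because both properties are checked site by site. Item~4 is part~3 of Proposition~\ref{prop:product}, restated.

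The main obstacle is item~2. ``Up to natural isomorphism'' and ``monoidal'' require a notion of morphism that has not yet been fixed, and a complete monoidal structure also needs the pentagon and triangle coherence conditions. My approach is to note that all the structure maps are the standard ones of \((\mathbf{FinSet},\times,\{\ast\})\), so coherence is inherited from the Cartesian monoidal structure on finite sets. The only extra checks are that these maps respect \(\sigma\) and \(\kappa\), which are the pointwise verifications above. The full formalisation is left to Section~6.
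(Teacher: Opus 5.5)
Your proposal is correct and follows essentially the paper's own route. The paper gives no separate proof: the corollary assembles Propositions~\ref{prop:product} and~\ref{prop:sum}, takes the canonical set bijections as the structure maps with coherence inherited from sets (Theorem~\ref{thm:monoidal}), and defers the coproduct property to Remark~\ref{rem:coproducts}. The one step you leave implicit is that $\phi_1\times\phi_2$ is again a morphism, which your naturality claim presupposes; it is a one-line pointwise check, carried out in Proposition~\ref{prop:bifunctor}.
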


\begin{remark}[Extension to complete SDS]
\label{rem:amplitudes-ext}
We have defined the operations on skeletons. If complete SDS are
available, i.e. with amplitudes \(\{A_\nu\}\), the operations extend
naturally:
\begin{itemize}
\item For the product, the amplitudes of the product are the tensor
  products of the amplitudes of each factor: if
  \(\mathcal S_i = (\mathcal E_i, \{A^{(i)}_{\nu_i}\})\),
  then
  \[
  A_{(\nu_1,\nu_2)}(z,w) = A^{(1)}_{\nu_1}(z)\, A^{(2)}_{\nu_2}(w).
  \]
  The full-rank condition, in standard cases (e.g. when the
  amplitudes of each factor form full-rank matrices), is inherited by
  the tensor product. However, as emphasised in Note~\ref{note:amplitudes},
  amplitudes are auxiliary to the categorical structure. We do not
  need to develop the general theory of extension of amplitudes here;
  it suffices to know that it exists and is compatible with the
  operations on skeletons.
\item For the disjoint union, the amplitudes are defined
  componentwise: on the first block we use those of \(\mathcal S_1\),
  and on the second those of \(\mathcal S_2\).
\end{itemize}
This separation is one of the conceptual advantages of the
architecture we have established: the fundamental operations of the
category are defined on skeletons, and amplitudes, if they exist,
transform in an induced way.
\end{remark}

\begin{remark}[Interpretation in terms of parametrisation]
\label{rem:param-interp}
In terms of the parametrisation \((Z,\sigma) + K + (r_O)\) of
Section 3, the operations take a particularly simple form:
\begin{itemize}
\item The Cartesian product combines the combinatorial structures
  \((Z_1,\sigma_1)\times(Z_2,\sigma_2)\), multiplies the degrees:
  \(K_{12}=K_1K_2\). For the orbit ratios, the pointwise formula
  \(r_{(z,w)}=r_z r_w\) holds for every site. However, when
  translating to complete orbits, care is needed: if
  \(O_1=\{z,\sigma_1z\}\) and \(O_2=\{w,\sigma_2w\}\) are non-trivial
  orbits, the product \(O_1\times O_2\) is not a single orbit of the
  product involution, but splits into two orbits:
  \[
  \{(z,w),(\sigma_1z,\sigma_2w)\},
  \qquad
  \{(z,\sigma_2w),(\sigma_1z,w)\}.
  \]
  With natural choices of representatives, the corresponding ratios
  are \(r_1r_2\) and \(r_1/r_2\).
\item The disjoint union combines the combinatorial structures
  \((Z_1,\sigma_1)\sqcup(Z_2,\sigma_2)\), keeps the same degree
  \(K\), and the list of coordinates is the concatenation of the
  lists of each factor.
\end{itemize}
This interpretation reinforces the idea that the degree \(K\) is the
true global invariant, while the \(r_O\) are internal coordinates
that behave multiplicatively or additively (in logarithms) according
to the operation, with the subtlety that the product of non-trivial
orbits splits into two distinct orbits.
\end{remark}

\section{Morphisms and category}
\label{sec:morphisms}

We have defined the objects of our theory (weighted dual skeletons)
and the operations combining them. We now introduce morphisms, the
arrows relating objects. The natural choice, consistent with the
skeleton structure, is to require that a morphism preserve both the
involution and the weights.

\begin{definition}[Morphism of skeletons]
\label{def:morphism}
Given two skeletons \(\mathcal E_1=(Z_1,\sigma_1,\kappa_1)\) and
\(\mathcal E_2=(Z_2,\sigma_2,\kappa_2)\), a \emph{morphism}
\(\phi:\mathcal E_1\to\mathcal E_2\) is a map
\(\phi:Z_1\to Z_2\) satisfying:
\begin{enumerate}
\item \emph{Commutation with involutions:}
  \[
  \phi(\sigma_1(z)) = \sigma_2(\phi(z)) \quad \forall z\in Z_1.
  \]
\item \emph{Exact weight preservation:}
  \[
  \kappa_1(z) = \kappa_2(\phi(z)) \quad \forall z\in Z_1.
  \]
\end{enumerate}
\end{definition}

\begin{remark}[Immediate consequences]
\label{rem:consequences}
From the definition several fundamental properties follow:
\begin{enumerate}
\item \emph{Preservation of degree:} \(K_1=K_2\), since
  \[
  K_1=\kappa_1(z)\kappa_1(\sigma_1(z))
  =\kappa_2(\phi(z))\kappa_2(\sigma_2(\phi(z)))=K_2.
  \]
  Hence morphisms only exist between skeletons of the same stratum
  \(\SDS_K\).

\item \emph{Pointwise preservation of ratios:} For every
  \(z\in Z_1\),
  \[
  r_{1,z}^2
  =
  \frac{\kappa_1(\sigma_1(z))}{\kappa_1(z)}
  =
  \frac{\kappa_2(\phi(\sigma_1(z)))}{\kappa_2(\phi(z))}
  =
  \frac{\kappa_2(\sigma_2(\phi(z)))}{\kappa_2(\phi(z))}
  =
  r_{2,\phi(z)}^2.
  \]
  Since the ratios are positive, we obtain the exact equality
  \[
  \boxed{r_{1,z}=r_{2,\phi(z)}}.
  \]
  There is no alternative of inversion here: the inverse appears only
  when translating this equality to orbit coordinates \(r_O\),
  depending on the choice of representatives. If the representative
  is changed \(z\leftrightarrow\sigma(z)\), then
  \(r_O\leftrightarrow r_O^{-1}\).

\item \emph{Behaviour on orbits:} The image of an orbit of
  \(\sigma_1\) is an orbit of \(\sigma_2\) (possibly a fixed point).
  In particular:
  \begin{itemize}
  \item If \(\phi(z)=\phi(\sigma_1(z))\), then \(\phi(z)\) is a
    fixed point of \(\sigma_2\), and weight preservation forces
    \(\kappa_1(z)=\kappa_1(\sigma_1(z))\), i.e. \(r_{1,z}=1\).
  \item If \(\phi(z)\neq\phi(\sigma_1(z))\), then, by commutation
    with the involutions, the image is a non-trivial orbit of
    \(\sigma_2\). The ratios are preserved (or inverted, according
    to the choice of representatives) in the sense explained in the
    previous point.
  \end{itemize}
\end{enumerate}
\end{remark}

\begin{proposition}[Skeletons form a category]
\label{prop:category}
Weighted dual skeletons, with morphisms as defined in
Definition~\ref{def:morphism}, form a category. Composition is
composition of maps, and the identity on \(\mathcal E\) is the
identity map \(\mathrm{id}_Z\).
\end{proposition}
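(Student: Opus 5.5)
The plan is to verify the category axioms directly, inheriting associativity and unit laws from the category of finite sets and maps. Since morphisms are defined as set maps $\phi:Z_1\to Z_2$ satisfying two conditions, and composition and identities are the set-theoretic ones, only two things need checking: the identity map $\mathrm{id}_Z$ is a morphism, and the composite of two morphisms is again a morphism.

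For the identity, both conditions of Definition~\ref{def:morphism} are tautological: $\mathrm{id}_Z(\sigma(z))=\sigma(z)=\sigma(\mathrm{id}_Z(z))$ and $\kappa(\mathrm{id}_Z(z))=\kappa(z)$. For composition, take morphisms $\phi:\mathcal E_1\to\mathcal E_2$ and $\psi:\mathcal E_2\to\mathcal E_3$. Commutation with involutions follows by applying the hypothesis twice:
\[
\psi(\phi(\sigma_1(z)))=\psi(\sigma_2(\phi(z)))=\sigma_3(\psi(\phi(z))).
\]
Weight preservation is handled the same way:
\[
\kappa_3(\psi(\phi(z)))=\kappa_2(\phi(z))=\kappa_1(z).
\]
Hence $\psi\circ\phi$ is a morphism $\mathcal E_1\to\mathcal E_3$.

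The category axioms then hold automatically. Associativity $(\chi\circ\psi)\circ\phi=\chi\circ(\psi\circ\phi)$ and the unit laws $\phi\circ\mathrm{id}=\phi=\mathrm{id}\circ\phi$ are identities of set maps. They hold because the hom-sets are subsets of the hom-sets of finite sets and composition is the restricted composition.

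Optionally, one can note that by Remark~\ref{rem:consequences} all hom-sets between different strata are empty, so the category splits as a disjoint union of full subcategories $\SDS_K$. This is not needed for the proposition itself. There is no real obstacle here. The only point requiring care is that composition must stay within the class, which is exactly the two-line verification above; everything else is inherited from $\mathbf{Set}$.
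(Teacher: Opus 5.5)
Your proposal is correct and follows essentially the same route as the paper: you check that a composite of two maps preserving involutions and weights is again such a map, and that the identity trivially qualifies. You then inherit associativity and the unit laws from the category of sets.
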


\begin{proof}
The composition of two morphisms is a morphism: if
\(\phi:\mathcal E_1\to\mathcal E_2\) and
\(\psi:\mathcal E_2\to\mathcal E_3\) preserve \(\sigma\) and
\(\kappa\), then \(\psi\circ\phi\) also does:
\[
(\psi\circ\phi)(\sigma_1(z))
=
\psi(\sigma_2(\phi(z)))
=
\sigma_3(\psi(\phi(z))),
\]
and
\[
\kappa_1(z)=\kappa_2(\phi(z))=\kappa_3(\psi(\phi(z))).
\]
The identity \(\mathrm{id}_Z\) trivially satisfies the conditions.
The associativity and identity laws are inherited from the category
of sets.
\end{proof}

\begin{definition}[Graded subcategories]
\label{def:subcat}
For each \(K>0\), we define the full subcategory \(\SDS_K\) whose
objects are skeletons of degree \(K\). By Remark~\ref{rem:consequences},
morphisms only exist within each \(\SDS_K\). Hence
\[
\SDS = \bigsqcup_{K>0} \SDS_K
\]
is a disjoint union of categories.
\end{definition}

\begin{remark}[Canonical function of degree]
\label{rem:Cstar-functor}
The invariant \(C^\ast\) is, in the structure we have built, a
function of the degree:
\[
C^\ast(K) = \frac{1}{1+\sqrt K}.
\]
This function is constant on each stratum \(\SDS_K\) and defines a
functor
\[
F:\SDS\longrightarrow \mathbb R_{>0}^{\mathrm{disc}},
\qquad
F(\mathcal E)=C^\ast(K),
\qquad
F(\phi)=\mathrm{id}_{C^\ast},
\]
to the discrete category of positive reals. The probabilistic
interpretation of \(C^\ast\) as a ruin probability is a particular
realisation, not part of the definition of SDS. The abstract theory
conceptually precedes that interpretation.
\end{remark}

\begin{remark}[Coproducts in \(\SDS_K\)]
\label{rem:coproducts}
Within each \(\SDS_K\), the disjoint union \(\sqcup\) satisfies the
universal property of coproduct. The inclusions
\(i_1:\mathcal E_1\to\mathcal E_1\sqcup\mathcal E_2\) and
\(i_2:\mathcal E_2\to\mathcal E_1\sqcup\mathcal E_2\) are morphisms.
Given morphisms \(f_1:\mathcal E_1\to\mathcal T\) and
\(f_2:\mathcal E_2\to\mathcal T\) with \(\mathcal T\in\SDS_K\), there
exists a unique morphism \(h:\mathcal E_1\sqcup\mathcal E_2\to\mathcal T\)
defined by \(h|_{Z_1}=f_1\), \(h|_{Z_2}=f_2\). (The empty object,
if included, would be the initial object.)
\end{remark}

\begin{remark}[Morphisms and coordinates \(r_O\)]
\label{rem:morphisms-r}
For general morphisms (not necessarily injective), the most
fundamental description is pointwise weight preservation. The
consequences for orbit ratios are derived from that.
In particular:
\begin{itemize}
\item If \(\phi\) is injective and bijects an orbit \(O_1\) of
  \(\sigma_1\) with an orbit \(O_2\) of \(\sigma_2\), then, with a
  suitable choice of representatives, \(r_{O_1}=r_{O_2}\)
  (or \(r_{O_1}=r_{O_2}^{-1}\) if representatives are swapped).
  This is the translation to orbit coordinates of the pointwise
  equality \(r_{1,z}=r_{2,\phi(z)}\).
\item If \(\phi\) collapses a non-trivial orbit to a fixed point,
  then \(r_{O_1}=1\).
\item For non-injective maps identifying sites from different orbits,
  compatibility with involutions and weights imposes further
  restrictions that are analysed case by case.
\end{itemize}
The description in terms of the coordinates \(r_O\) as an "action" on
the set of parameters is valid for isomorphisms and for injective
morphisms preserving the orbit structure, but not for general
morphisms. Hence, in this section we have focused on the pointwise
formulation, which has universal validity.
\end{remark}

\section{Graded monoidal structure}
\label{sec:monoidal}

We have established that \(\SDS\) is a category whose objects are
weighted dual skeletons, and that the Cartesian product \(\times\) is
an associative and commutative operation on objects, with unit the
skeleton \(\mathcal I\) on a single fixed point with \(\kappa=1\), of
degree \(K=1\). In this section we prove that \(\times\) extends to a
bifunctor, endowing \(\SDS\) with the structure of a symmetric
monoidal category graded by degree. Moreover, we study the
Grothendieck group of this monoidal category and show that the degree
induces a surjective homomorphism onto \(\mathbb R_{>0}\). A complete
description of the Grothendieck group is not developed here and is
left as a question for Section 10.

\begin{definition}[Extension of product to morphisms]
\label{def:bifunctor}
Given morphisms \(\phi_1:\mathcal E_1\to\mathcal E_1'\) and
\(\phi_2:\mathcal E_2\to\mathcal E_2'\), we define their
\emph{Cartesian product}
\[
\phi_1 \times \phi_2 : \mathcal E_1 \times \mathcal E_2
\longrightarrow \mathcal E_1' \times \mathcal E_2'
\]
by
\[
(\phi_1 \times \phi_2)(z,w) = (\phi_1(z), \phi_2(w)).
\]
\end{definition}

\begin{proposition}[The product is a bifunctor]
\label{prop:bifunctor}
The assignment
\[
(\mathcal E_1,\mathcal E_2)\longmapsto \mathcal E_1\times\mathcal E_2,
\qquad
(\phi_1,\phi_2)\longmapsto \phi_1\times\phi_2
\]
defines a bifunctor
\[
\times : \SDS \times \SDS \longrightarrow \SDS.
\]
That is, it respects composition and identities in each variable.
\end{proposition}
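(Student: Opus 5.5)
The plan is to check three things directly from the definitions. First, on objects the assignment lands in \(\SDS\); this is Proposition~\ref{prop:product}. Second, \(\phi_1\times\phi_2\) is a morphism in the sense of Definition~\ref{def:morphism}. Third, the assignment preserves identities and composition. Degree preservation needs no separate argument, since by Remark~\ref{rem:consequences} the domain and codomain of each \(\phi_i\) already share a degree \(K_i\). The source and target of \(\phi_1\times\phi_2\) then both have degree \(K_1K_2\), consistent with the grading of Corollary~\ref{cor:grading}.

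For the morphism property, fix \((z,w)\in Z_1\times Z_2\). Commutation with the involutions follows componentwise from the corresponding property of each \(\phi_i\):
\[
(\phi_1\times\phi_2)\bigl(\sigma_1(z),\sigma_2(w)\bigr)
=\bigl(\phi_1(\sigma_1 z),\phi_2(\sigma_2 w)\bigr)
=\bigl(\sigma_1'(\phi_1 z),\sigma_2'(\phi_2 w)\bigr)
=(\sigma_1'\times\sigma_2')\bigl((\phi_1\times\phi_2)(z,w)\bigr).
\]
For weight preservation, use the pointwise product of Definition~\ref{def:product}:
\[
(\kappa_1'\otimes\kappa_2')\bigl(\phi_1 z,\phi_2 w\bigr)
=\kappa_1'(\phi_1 z)\,\kappa_2'(\phi_2 w)
=\kappa_1(z)\,\kappa_2(w)
=(\kappa_1\otimes\kappa_2)(z,w).
\]
Hence \(\phi_1\times\phi_2\) is a morphism of skeletons.

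Functoriality is then inherited from the category of sets, as in Proposition~\ref{prop:category}. For identities, \(\mathrm{id}_{Z_1}\times\mathrm{id}_{Z_2}=\mathrm{id}_{Z_1\times Z_2}\). For composition, given \(\psi_i:\mathcal E_i'\to\mathcal E_i''\), both \((\psi_1\times\psi_2)\circ(\phi_1\times\phi_2)\) and \((\psi_1\circ\phi_1)\times(\psi_2\circ\phi_2)\) send \((z,w)\) to \((\psi_1\phi_1 z,\psi_2\phi_2 w)\), so they are equal. Functoriality in each variable separately follows by specialising one of the two factors to an identity morphism.

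I do not expect a genuine obstacle. The only point needing care is that the weight condition is multiplicative rather than additive. Because both factors are preserved exactly, and not merely up to a scalar, no compensating constant appears and the product weight is preserved on the nose.
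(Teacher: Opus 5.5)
Your proof is correct and follows the paper's route. You verify componentwise that \(\phi_1\times\phi_2\) commutes with \(\sigma_1'\times\sigma_2'\) and preserves \(\kappa_1\otimes\kappa_2\), then check composition and identities at the level of sets; your added remarks on degree and on functoriality in each variable separately are harmless extras.
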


\begin{proof}
First, \(\phi_1\times\phi_2\) is a morphism. For every
\((z,w)\in Z_1\times Z_2\),
\[
(\phi_1\times\phi_2)(\sigma_1(z),\sigma_2(w))
=
(\phi_1(\sigma_1(z)), \phi_2(\sigma_2(w)))
=
(\sigma_1'(\phi_1(z)), \sigma_2'(\phi_2(w)))
=
(\sigma_1'\times\sigma_2')(\phi_1(z),\phi_2(w)),
\]
and
\[
(\kappa_1\otimes\kappa_2)(z,w)
=
\kappa_1(z)\kappa_2(w)
=
\kappa_1'(\phi_1(z))\,\kappa_2'(\phi_2(w))
=
(\kappa_1'\otimes\kappa_2')(\phi_1(z),\phi_2(w)).
\]
Thus \(\phi_1\times\phi_2\) preserves involution and weights.

Compatibility with composition is immediate:
\[
(\psi_1\circ\phi_1)\times(\psi_2\circ\phi_2)
=
(\psi_1\times\psi_2)\circ(\phi_1\times\phi_2),
\]
and identities are preserved:
\[
\mathrm{id}_{\mathcal E_1} \times \mathrm{id}_{\mathcal E_2}
=
\mathrm{id}_{\mathcal E_1\times\mathcal E_2}.
\]
Hence \(\times\) is a bifunctor.
\end{proof}

\begin{theorem}[\(\SDS\) is a symmetric monoidal category]
\label{thm:monoidal}
The triple \((\SDS,\times,\mathcal I)\), where \(\mathcal I\) is the
skeleton on a single fixed point with \(\kappa=1\), is a symmetric
monoidal category. The natural isomorphisms of associativity, unit,
and commutativity are induced by the canonical set isomorphisms:
\begin{align*}
\alpha_{\mathcal E_1,\mathcal E_2,\mathcal E_3}
&: (\mathcal E_1\times\mathcal E_2)\times\mathcal E_3
\longrightarrow \mathcal E_1\times(\mathcal E_2\times\mathcal E_3),
\\
\lambda_{\mathcal E}
&: \mathcal I\times\mathcal E \longrightarrow \mathcal E,
\\
\rho_{\mathcal E}
&: \mathcal E\times\mathcal I \longrightarrow \mathcal E,
\\
\gamma_{\mathcal E_1,\mathcal E_2}
&: \mathcal E_1\times\mathcal E_2
\longrightarrow \mathcal E_2\times\mathcal E_1,
\end{align*}
which act as the corresponding set bijections and preserve
involutions and weights.
\end{theorem}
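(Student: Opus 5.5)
The plan is to transport the symmetric monoidal structure of \((\mathbf{Set},\times,\{\ast\})\) along the forgetful map that sends a skeleton to its set of sites. The argument has three steps.

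\textbf{Step 1: the structure maps are morphisms.} I will check that each of the four canonical bijections is a morphism in the sense of Definition~\ref{def:morphism}.

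For \(\alpha((z_1,z_2),z_3)=(z_1,(z_2,z_3))\), both involutions act componentwise, so \(\alpha\) commutes with them. The weights agree because
\[
(\kappa_1\kappa_2)\kappa_3=\kappa_1(\kappa_2\kappa_3)
\]
by associativity of multiplication in \(\mathbb R_{>0}\).

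For \(\lambda(\ast,z)=z\), the unit \(\mathcal I\) carries the trivial involution and \(\kappa(\ast)=1\). Hence \((\sigma_{\mathcal I}\times\sigma)(\ast,z)=(\ast,\sigma(z))\mapsto\sigma(z)\), and the weight is \(1\cdot\kappa(z)=\kappa(z)\). The map \(\rho\) is handled symmetrically.

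For \(\gamma(z,w)=(w,z)\), commutation with the involutions is immediate. Weight preservation follows from \(\kappa_1(z)\kappa_2(w)=\kappa_2(w)\kappa_1(z)\).

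Degrees are consistent throughout, by Proposition~\ref{prop:product} and the fact that \(\deg\mathcal I=1\).

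\textbf{Step 2: the structure maps are isomorphisms.} Each map is bijective, and its set-theoretic inverse is again a morphism. This holds for any bijective morphism: applying \(\phi^{-1}\) to \(\phi\sigma_1=\sigma_2\phi\) gives \(\sigma_1\phi^{-1}=\phi^{-1}\sigma_2\), and \(\kappa_2=\kappa_1\circ\phi^{-1}\). So all four maps are isomorphisms in \(\SDS\).

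\textbf{Step 3: naturality and the coherence axioms.} For naturality of \(\alpha\), take morphisms \(\phi_i\). We need
\[
\alpha\circ((\phi_1\times\phi_2)\times\phi_3)=(\phi_1\times(\phi_2\times\phi_3))\circ\alpha .
\]
This is an equality of maps of underlying sets, so it can be checked pointwise: both sides send \(((z_1,z_2),z_3)\) to \((\phi_1z_1,(\phi_2z_2,\phi_3z_3))\). Naturality of \(\lambda\), \(\rho\) and \(\gamma\) is checked the same way, using the bifunctor structure of Proposition~\ref{prop:bifunctor}.

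The pentagon, triangle, hexagon and symmetry (\(\gamma_{\mathcal E_2,\mathcal E_1}\circ\gamma_{\mathcal E_1,\mathcal E_2}=\mathrm{id}\)) axioms are equalities between composites of morphisms. Morphisms are equal exactly when their underlying maps are equal, because the forgetful functor \(\SDS\to\mathbf{Set}\) is faithful by construction. These underlying maps are the canonical cartesian-product bijections in \(\mathbf{Set}\), for which the axioms are classical. Hence the axioms hold in \(\SDS\).

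The only point needing care is that the weights are genuinely preserved and that the unit has weight \(1\) (not \(\sqrt K\) for arbitrary \(K\)). This is what forces \(\mathcal I\) to lie in degree \(1\), so that \(\mathcal I\times\mathcal E\) has the same degree as \(\mathcal E\) and \(\lambda\) can exist at all by Remark~\ref{rem:consequences}. Beyond this, the proof is bookkeeping, and I expect no real obstacle; the faithfulness argument disposes of all the coherence diagrams at once.
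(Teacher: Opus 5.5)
Your proposal is correct and follows the paper's route: check that the canonical set bijections commute with the product involutions and preserve the product weights, then inherit the pentagon, triangle and hexagon coherence from \((\mathbf{Set},\times,\{\ast\})\). Your version is just more explicit, filling in what the paper calls ``routine'': that inverses of bijective morphisms are morphisms, naturality checked pointwise, and faithfulness of the forgetful functor as the reason coherence transfers.
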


\begin{proof}
The verification is routine. The canonical set isomorphisms commute
with the product involutions and preserve pointwise weights, hence
they are morphisms in \(\SDS\). The coherence diagrams (pentagon for
associativity, triangle for unit, and hexagon for commutativity) are
inherited from the category of sets. Therefore,
\((\SDS,\times,\mathcal I)\) is a symmetric monoidal category.
\end{proof}

\begin{corollary}[Grading of the monoidal structure]
\label{cor:monoidal-graded}
The monoidal category \((\SDS,\times,\mathcal I)\) is graded by the
multiplicative group \(\mathbb R_{>0}\) via the degree map \(\deg\).
Specifically:
\begin{enumerate}
\item The tensor product sends objects of degrees \(K_1\) and \(K_2\)
  to degree \(K_1K_2\):
  \[
  \times : \SDS_{K_1} \times \SDS_{K_2}
  \longrightarrow \SDS_{K_1K_2}.
  \]
\item Morphisms preserve degree, so the bifunctor respects the
  grading.
\item The unit \(\mathcal I\) has degree \(1\).
\end{enumerate}
Thus \(\SDS\) is a \emph{graded monoidal category} over the
multiplicative group of positive reals.
\end{corollary}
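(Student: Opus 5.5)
The plan is to verify the three items of the corollary one at a time, using only the multiplicativity of degree from Proposition~\ref{prop:product}, degree preservation by morphisms from Remark~\ref{rem:consequences}, and the bifunctor structure established in Proposition~\ref{prop:bifunctor} and Theorem~\ref{thm:monoidal}. No new construction is needed. The corollary records that the degree map \(\deg\) is compatible with every layer of the monoidal structure.

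For item~1, I will invoke Proposition~\ref{prop:product}(1) directly. If \(\deg(\mathcal E_i)=K_i\), then \(\mathcal E_1\times\mathcal E_2\) is a skeleton with \(\deg(\mathcal E_1\times\mathcal E_2)=K_1K_2\), so on objects \(\times\) restricts to a map \(\SDS_{K_1}\times\SDS_{K_2}\to\SDS_{K_1K_2}\). On morphisms, a pair \((\phi_1,\phi_2)\) with \(\phi_i\) in \(\SDS_{K_i}\) goes to \(\phi_1\times\phi_2\), which is a morphism by Proposition~\ref{prop:bifunctor}. Its source and target both have degree \(K_1K_2\), so it lies in \(\SDS_{K_1K_2}\). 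Functoriality of the restriction is inherited from the bifunctor.

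For item~2, Remark~\ref{rem:consequences}(1) shows that every morphism has source and target of equal degree. Hence \(\SDS=\bigsqcup_K\SDS_K\) as in Definition~\ref{def:subcat}, and by item~1 the bifunctor sends the block \(\SDS_{K_1}\times\SDS_{K_2}\) into the block \(\SDS_{K_1K_2}\). I would also check that the structure isomorphisms \(\alpha,\lambda,\rho,\gamma\) of Theorem~\ref{thm:monoidal} are homogeneous. They are morphisms, so they are degree-preserving automatically. This matches \((K_1K_2)K_3=K_1(K_2K_3)\), \(1\cdot K=K\), and \(K_1K_2=K_2K_1\), which are the group laws of \(\mathbb R_{>0}\).

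For item~3, the unit \(\mathcal I\) has one site \(z_0\) with \(\sigma(z_0)=z_0\) and \(\kappa(z_0)=1\), so \(K=\kappa(z_0)^2=1\). Assembling the three items, \(\deg\) defines a monoidal functor from \(\SDS\) to the discrete monoidal category \((\mathbb R_{>0},\cdot,1)\), which is the meaning of ``graded over the multiplicative group.''

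The only point needing any care is stating precisely what ``graded'' means here, so that the argument is not circular. I will take it to mean the existence of this strict monoidal functor to the discrete category \(\mathbb R_{>0}\) under which the tensor product becomes multiplication and the unit goes to \(1\). Everything else is a direct citation of earlier results.
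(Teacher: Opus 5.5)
Your proposal is correct and follows the paper's reasoning: the paper gives no separate proof for this corollary and treats it as an immediate consequence of the multiplicativity of degree (Proposition~\ref{prop:product}), degree preservation by morphisms (Remark~\ref{rem:consequences}), and \(\deg(\mathcal I)=1\), which are exactly the three ingredients you cite. Your additional check that the coherence isomorphisms are homogeneous, and your formalisation of ``graded'' as a strict monoidal functor to the discrete monoidal category \((\mathbb R_{>0},\cdot,1)\), are harmless refinements that the paper leaves implicit.
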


\begin{remark}[Grothendieck group and degree]
\label{rem:grothendieck}
The Grothendieck group \(K_0(\SDS)\) of the monoidal category
\((\SDS,\times,\mathcal I)\) is, by definition, the abelian group
generated by the isomorphism classes \([\mathcal E]\) of objects of
\(\SDS\), with the relation
\[
[\mathcal E_1] + [\mathcal E_2] = [\mathcal E_1 \times \mathcal E_2],
\]
and with formal inverses for each object.

The degree map \(\deg:\SDS\to\mathbb R_{>0}\) is a monoidal
homomorphism (by multiplicativity of degree), and is constant on
isomorphism classes. Hence it induces a group homomorphism
\[
\overline{\deg}: K_0(\SDS) \longrightarrow \mathbb R_{>0}.
\]

This homomorphism is \emph{surjective}: for each \(K>0\), take any
skeleton of degree \(K\) (for instance, one with a single non-trivial
pair and weights \(\sqrt K\), \(\sqrt K\), which gives \(r=1\)).
Thus the image of \(\overline{\deg}\) is all \(\mathbb R_{>0}\).

The question of whether \(\overline{\deg}\) is injective, or what its
kernel is, is not addressed in this paper. The answer would require a
detailed analysis of the additional invariants (number of sites,
orbit structure, coordinates \(r_O\)) and how they behave under the
Grothendieck relations. We leave this question for Section 10.
\end{remark}

\begin{corollary}[Reinterpretation of \(K\) and \(C^\ast\)]
\label{cor:K-character}
The degree \(K\) is the \emph{degree character} of the graded
monoidal category \(\SDS\). It provides a surjective homomorphism
\[
\deg: K_0(\SDS) \to \mathbb R_{>0}
\]
that classifies objects into strata \(\SDS_K\), but we do not know if
it is a complete invariant. The canonical function
\[
C^\ast(K) = \frac{1}{1+\sqrt K}
\]
is then a real function of the degree character, which may be
interpreted probabilistically only in concrete realisations, but in
the abstract theory is a derived invariant.
\end{corollary}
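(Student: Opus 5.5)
The statement to prove is Corollary~\ref{cor:K-character}. It asserts that \(\deg\) descends to a surjective group homomorphism \(K_0(\SDS)\to\mathbb R_{>0}\), that it sorts objects into the strata \(\SDS_K\), and that \(C^\ast\) is a function of it. The plan is to assemble these from Proposition~\ref{prop:product}, Remark~\ref{rem:consequences} and Remark~\ref{rem:grothendieck}, checking each ingredient explicitly rather than citing it loosely.

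\textbf{Step 1: \(\deg\) is constant on isomorphism classes.} An isomorphism is in particular a morphism. By item 1 of Remark~\ref{rem:consequences}, every morphism satisfies \(K_1=K_2\).

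\textbf{Step 2: \(\deg\) is a monoid homomorphism.} Consider the commutative monoid of isomorphism classes under \(\times\).
\begin{itemize}
\item By Proposition~\ref{prop:product}(1), \(\deg(\mathcal E_1\times\mathcal E_2)=\deg(\mathcal E_1)\deg(\mathcal E_2)\).
\item The unit \(\mathcal I\) has \(\kappa=1\) at its single fixed point, so \(\deg\mathcal I=1\).
\end{itemize}
Since \(\mathbb R_{>0}\) is an abelian group, the universal property of group completion extends \(\deg\) uniquely to a homomorphism \(\overline{\deg}\) on \(K_0(\SDS)\), with \(\overline{\deg}([\mathcal E]-[\mathcal E'])=\deg\mathcal E/\deg\mathcal E'\). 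This is exactly where one needs the relation \([\mathcal E_1]+[\mathcal E_2]=[\mathcal E_1\times\mathcal E_2]\) to be respected, and that is the content of multiplicativity.

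\textbf{Step 3: surjectivity.} Given \(K>0\), I would exhibit the one-point skeleton with a fixed point and \(\kappa=\sqrt K\). This is forced by the remark following Definition~\ref{def:skeleton} and is a valid skeleton of degree \(K\). Alternatively, one can use the pair with weights \(\sqrt K,\sqrt K\) from Remark~\ref{rem:grothendieck}. Either way, \([\mathcal E]\mapsto K\).

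\textbf{Step 4: stratification and \(C^\ast\).} Every object lies in exactly one \(\SDS_K\), namely \(K=\deg\mathcal E\), so the classification into strata is the fibre decomposition of \(\deg\). Definition~\ref{def:Cstar} writes \(C^\ast=g(\deg)\) with \(g(K)=1/(1+\sqrt K)\), so \(C^\ast\) is a derived invariant of the character. It is also functorial by Remark~\ref{rem:Cstar-functor}.

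\textbf{Step 5: non-completeness.} The clause ``we do not know if it is a complete invariant'' is a disclaimer, not a claim, so nothing needs to be proved. I would nonetheless note that at the level of objects, as opposed to \(K_0\), \(\deg\) is clearly not complete: the one-point and two-point skeletons of degree \(K\) are non-isomorphic because their site counts differ. Only the \(K_0\)-level question remains open.

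\textbf{Main obstacle.} The only step that needs care is Step 2, where \(\deg\) must descend to \(K_0(\SDS)\). The point is that the well-definedness of \(K_0(\SDS)\) itself requires the isomorphism classes to form a set and \(\times\) to be well defined on classes. The latter follows from the bifunctoriality in Proposition~\ref{prop:bifunctor}: isomorphisms tensor to isomorphisms.
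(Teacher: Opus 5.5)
Your proposal is correct and follows the paper's own justification in Remark~\ref{rem:grothendieck}. Constancy on isomorphism classes and multiplicativity of the degree (with \(\deg\mathcal I=1\)) give the induced homomorphism \(\overline{\deg}:K_0(\SDS)\to\mathbb R_{>0}\), and any explicit degree-\(K\) object gives surjectivity; your one-point skeleton with \(\kappa=\sqrt K\) works as well as the paper's two-site one. One small correction to your Step~5: the \(K_0\)-level question is not actually open. Apply your Step~2 argument to the monoid homomorphism \([\mathcal E]\mapsto|Z|\) into \((\mathbb R_{>0},\cdot)\): it descends to \(K_0(\SDS)\) and separates the class of the two-site skeleton of degree \(1\) (value \(2\)) from \([\mathcal I]=0\) (value \(1\)), so that class is a nonzero element of \(\ker\overline{\deg}\) and \(\overline{\deg}\) is not injective.
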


\section{Classification of objects}
\label{sec:classification}

We have defined the objects (weighted dual skeletons), the operations
combining them, and the morphisms relating them. We now answer the
fundamental question: when are two skeletons essentially the same
object? The answer is that the classification up to isomorphism is
given by the invariants identified in the previous sections: the
combinatorial type \((k,f)\), the degree \(K\), and the multiset of
inversion classes of orbit ratios.

\begin{definition}[Isomorphism of skeletons]
\label{def:isomorphism}
Two skeletons \(\mathcal E_1=(Z_1,\sigma_1,\kappa_1)\) and
\(\mathcal E_2=(Z_2,\sigma_2,\kappa_2)\) are \emph{isomorphic} if
there exists a morphism \(\phi:\mathcal E_1\to\mathcal E_2\) that is
bijective. In that case, by Definition~\ref{def:morphism},
\(\phi\) satisfies:
\[
\phi(\sigma_1(z)) = \sigma_2(\phi(z)), \qquad
\kappa_1(z) = \kappa_2(\phi(z)) \quad \forall z\in Z_1,
\]
and its inverse \(\phi^{-1}\) is also a morphism.
\end{definition}

\begin{theorem}[Classification up to isomorphism]
\label{thm:classification}
Two skeletons \(\mathcal E_1\) and \(\mathcal E_2\) are isomorphic if
and only if:
\begin{enumerate}
\item They have the same \emph{combinatorial type}: the number of
  non-trivial pairs \(k\) and the number of fixed points \(f\)
  coincide. Equivalently, \(|Z_1|=|Z_2|\) and the number of orbits of
  each type is the same.
\item They have the same degree \(K\).
\item There exists a bijection between the non-trivial orbits of
  \(\sigma_1\) and those of \(\sigma_2\) such that, for each
  corresponding pair of orbits, the inversion classes of the ratios
  are equal:
  \[
  [r_{O_1}] = [r_{O_2}],
  \qquad [r] = \{r, r^{-1}\}.
  \]
  Equivalently, the multiset of inversion classes
  \(\{[r_1],\dots,[r_k]\}\) is the same in both skeletons.
\end{enumerate}
\end{theorem}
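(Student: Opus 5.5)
We prove the two implications separately. Necessity follows from the consequences of Remark~\ref{rem:consequences}. Sufficiency uses the parametrisation of Proposition~\ref{prop:param} to build an explicit bijection orbit by orbit.

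\emph{Necessity.} Let \(\phi:\mathcal E_1\to\mathcal E_2\) be a bijective morphism.
\begin{enumerate}
\item Degree: condition~2 is item~1 of Remark~\ref{rem:consequences}.
\item Combinatorial type: since \(\phi\) commutes with the involutions, it maps each \(\sigma_1\)-orbit onto a \(\sigma_2\)-orbit. Injectivity rules out collapsing a pair. If \(z\) is fixed, then \(\sigma_2(\phi(z))=\phi(\sigma_1 z)=\phi(z)\), so \(\phi(z)\) is fixed. Applying the same argument to \(\phi^{-1}\), which is also a morphism (Definition~\ref{def:isomorphism}), shows that \(\phi\) restricts to a bijection between fixed points and induces a bijection between non-trivial orbits. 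Hence \(f_1=f_2\) and \(k_1=k_2\).
\item Ratio classes: for a non-trivial orbit \(O_1=\{z,\sigma_1 z\}\) with image \(O_2=\{\phi(z),\sigma_2\phi(z)\}\), the pointwise identity \(r_{1,z}=r_{2,\phi(z)}\) of Remark~\ref{rem:consequences} gives \(r_{O_1}=r_{O_2}^{\pm1}\), depending on the chosen representatives. By Remark~\ref{rem:representatives} this means \([r_{O_1}]=[r_{O_2}]\).
\end{enumerate}

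\emph{Sufficiency.} Assume conditions 1--3 hold, and fix the orbit bijection \(O\mapsto O'\) given by condition~3. Define \(\phi\) as follows.
\begin{enumerate}
\item On fixed points, choose any bijection \(\mathrm{Fix}(\sigma_1)\to\mathrm{Fix}(\sigma_2)\); this exists because \(f_1=f_2\).
\item On each non-trivial orbit \(O=\{z_O,\sigma_1 z_O\}\), condition~3 gives \(r_{O}=r_{O'}\) or \(r_O=r_{O'}^{-1}\). In the second case, replace the representative \(z_{O'}\) by \(\sigma_2 z_{O'}\), which inverts \(r_{O'}\), so that \(r_O=r_{O'}\) exactly. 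Then set \(\phi(z_O)=z_{O'}\) and \(\phi(\sigma_1 z_O)=\sigma_2 z_{O'}\).
\end{enumerate}
The map \(\phi\) is bijective, because it is assembled from bijections on a partition of \(Z_1\) onto the corresponding partition of \(Z_2\). It commutes with the involutions by construction: on fixed points both sides equal \(\phi(z)\), and on each pair \(\phi\) interchanges correctly.

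It remains to check weight preservation. At fixed points, both weights equal \(\sqrt K\) by the Remark following Definition~\ref{def:skeleton}, using the common degree from condition~2. On each pair, formula~\eqref{eq:kappaKr} gives
\[
\kappa_1(z_O)=\frac{\sqrt K}{r_O}=\frac{\sqrt K}{r_{O'}}=\kappa_2(z_{O'}),
\]
and similarly \(\kappa_1(\sigma_1 z_O)=\sqrt K\,r_O=\kappa_2(\sigma_2 z_{O'})\). Hence \(\phi\) is a bijective morphism. Its inverse satisfies the same two identities read backwards, so it is also a morphism.

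The equivalence of the two phrasings in condition~3 is immediate: a bijection of orbits matching the inversion classes exists exactly when the two multisets of classes coincide. The only delicate point in the whole argument is the representative bookkeeping, that is, realising \([r_O]=[r_{O'}]\) as an exact equality \(r_O=r_{O'}\) by swapping representatives. This is exactly where Remark~\ref{rem:representatives} is needed, and it is also why condition~2 is required: equal ratios alone do not determine the weights unless \(K\) also agrees.
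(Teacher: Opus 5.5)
Your proof is correct and follows the paper's argument essentially step by step. Necessity comes from degree preservation and the pointwise identity $r_{1,z}=r_{2,\phi(z)}$ under a bijection that respects orbits. Sufficiency comes from matching orbits, swapping representatives to turn $[r_{O_1}]=[r_{O_2}]$ into an exact equality, and checking weights with the $(K,r)$ parametrisation and $\kappa=\sqrt K$ at fixed points. You are slightly more careful than the paper on two points. You require the orbit bijection in the sufficiency step to be the one supplied by condition~3, whereas the paper's step~1 only invokes equality of $k$ and $f$. You also verify explicitly that the inverse is a morphism.
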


\begin{proof}
We first prove necessity. Suppose
\(\phi:\mathcal E_1\to\mathcal E_2\) is an isomorphism. By
Remark~\ref{rem:consequences}, \(\phi\) preserves degree, so
\(K_1=K_2\). Moreover, \(\phi\) is a bijection commuting with the
involutions, hence it bijects the orbits of \(\sigma_1\) to those of
\(\sigma_2\), preserving the size of each orbit. Thus \(k_1=k_2\) and
\(f_1=f_2\).

For the ratios, pointwise weight preservation gives, for every
\(z\in Z_1\),
\[
r_{1,z} = r_{2,\phi(z)}.
\]
If \(O_1\) is a non-trivial orbit with representative \(z\), and
\(O_2=\phi(O_1)\) is its image, then with the choice of
representatives induced by \(\phi\), one has \(r_{O_1}=r_{O_2}\).
If representatives are changed, the equality may become
\(r_{O_1}=r_{O_2}^{-1}\), but the inversion class \([r]\) is
invariant. Hence the multiset of inversion classes is preserved.

We now prove sufficiency constructively. Suppose \(\mathcal E_1\) and
\(\mathcal E_2\) satisfy the three conditions. We construct an
isomorphism \(\phi\) in several steps:

1.  Use the equality of \(k\) and \(f\) to choose a bijection between
    the non-trivial orbits of \(\sigma_1\) and those of \(\sigma_2\),
    and another bijection between their fixed points.

2.  For each corresponding pair of non-trivial orbits
    \(O_1\subseteq Z_1\) and \(O_2\subseteq Z_2\), choose
    representatives \(z_1\in O_1\) and \(z_2\in O_2\) such that
    \(r_{O_1}=r_{O_2}\). This is possible because the inversion
    classes \([r_{O_1}]\) and \([r_{O_2}]\) are equal: if the class
    equality gives \(r_{O_1}=r_{O_2}^{-1}\), we swap the
    representative in one of the orbits.

3.  Define \(\phi(z_1)=z_2\). For \(\phi\) to commute with the
    involutions, we must set
    \[
    \phi(\sigma_1(z_1)) = \sigma_2(\phi(z_1)) = \sigma_2(z_2).
    \]
    This determines \(\phi\) on the whole orbit \(O_1\).

4.  Check weight equality. By the parametrisation of Section 3,
    \[
    \kappa_1(z_1)=\frac{\sqrt K}{r_{O_1}}
    =\frac{\sqrt K}{r_{O_2}}
    =\kappa_2(z_2),
    \]
    and similarly,
    \[
    \kappa_1(\sigma_1(z_1))=\sqrt K\, r_{O_1}
    =\sqrt K\, r_{O_2}
    =\kappa_2(\sigma_2(z_2)).
    \]
    Hence \(\phi\) preserves weights on the whole orbit.

5.  For fixed points, the equality of \(f\) and \(K\) gives a
    bijection between them, and weight preservation is automatic:
    \(\kappa_1(z_0)=\sqrt K=\kappa_2(\phi(z_0))\), as follows from
    Section 3.

The map \(\phi\) thus defined is a bijection, commutes with the
involutions, and preserves weights. Therefore it is an isomorphism.
\end{proof}

\begin{corollary}[Classification in terms of structural decomposition]
\label{cor:classification-struct}
Up to isomorphism, a weighted dual skeleton is completely determined
by:
\begin{enumerate}
\item The combinatorial type \((k,f)\), where \(k\) is the number of
  non-trivial pairs and \(f\) the number of fixed points.
\item The degree \(K>0\).
\item A multiset of \(k\) inversion classes
  \(\{[r_1],\dots,[r_k]\}\), where each \([r_i]=\{r_i,r_i^{-1}\}\).
\end{enumerate}
This is the most compact and elegant form of the classification.
\end{corollary}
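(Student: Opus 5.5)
The corollary is essentially a restatement of Theorem~\ref{thm:classification}, so I would derive it from that theorem together with the parametrisation of Proposition~\ref{prop:param}. The claim has two parts. First, the data \(((k,f),K,\{[r_1],\dots,[r_k]\})\) is an isomorphism invariant. Second, the data determines the skeleton up to isomorphism. I would also add a realisability check: every such triple of data actually occurs.

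For invariance, I would cite the necessity half of Theorem~\ref{thm:classification}. An isomorphism preserves \(k\), \(f\) and \(K\), and carries the multiset of inversion classes to itself. I would note that this multiset is well defined independently of the choice of representatives, by Remark~\ref{rem:representatives}: changing a representative replaces \(r_O\) by \(r_O^{-1}\), which leaves \([r_O]\) unchanged. Completeness is the sufficiency half of the same theorem. If two skeletons have equal data, then equality of the multisets gives a bijection of non-trivial orbits with \([r_{O_1}]=[r_{O_2}]\), and the theorem then produces an isomorphism.

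For realisability, I would construct a model skeleton from arbitrary data \((k,f)\), \(K>0\) and a multiset \(\{[r_1],\dots,[r_k]\}\). Take \(Z=\{a_1,b_1,\dots,a_k,b_k,c_1,\dots,c_f\}\), with \(\sigma\) swapping \(a_i\leftrightarrow b_i\) and fixing each \(c_j\). Choose any representative \(r_i\) of each class, and set the weights by \eqref{eq:kappaKr}:
\[
\kappa(a_i)=\sqrt K/r_i,\qquad \kappa(b_i)=\sqrt K\,r_i,\qquad \kappa(c_j)=\sqrt K .
\]
Proposition~\ref{prop:param} shows this is a skeleton of degree \(K\) with exactly the prescribed invariants. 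This gives a bijection between isomorphism classes of skeletons and the set of admissible data, subject to the constraint \(2k+f\ge 1\) from non-emptiness of \(Z\).

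There is no real obstacle here. The only point needing care is the well-definedness of the multiset of inversion classes independently of representatives, and Remark~\ref{rem:representatives} together with the necessity argument already handle it. I would also observe that \(f\) is not independent of the other data once \(|Z|\) is known, and that when \(f\ge1\) the value of \(K\) is forced to equal \(\kappa(z_0)^2\) at any fixed point \(z_0\). Neither observation affects the statement.
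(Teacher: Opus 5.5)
Your proposal is correct and follows the paper's route. The paper gives no separate argument: it treats the corollary as a repackaging of Theorem~\ref{thm:classification}, whose necessity half gives invariance of \((k,f)\), \(K\) and the representative-independent multiset \(\{[r_1],\dots,[r_k]\}\), and whose sufficiency half gives completeness. Your realisability construction via the converse direction of Proposition~\ref{prop:param} is also correct; it upgrades the statement to a bijection between isomorphism classes and admissible data with \(2k+f\ge1\), which goes beyond what the corollary requires.
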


\begin{example}[Skeletons of degree \(K\) with a single orbit]
\label{ex:one-orbit}
Consider skeletons with \(Z=\{z,\sigma(z)\}\) (a single non-trivial
pair) and degree \(K>0\). By Section 3, the weight is determined by
a single number \(r>0\). Two such skeletons are isomorphic if and
only if they have the same \(K\) and the same class
\([r]=\{r,r^{-1}\}\). Hence the space of isomorphism classes for this
combinatorial type is
\[
\mathbb R_{>0} \times \left(\mathbb R_{>0} \big/ \{r\sim r^{-1}\}\right),
\]
where the quotient is by the involution \(r\mapsto r^{-1}\). This
reflects that swapping the two sites of the orbit inverts \(r\), but
does not change the isomorphism.
\end{example}

\begin{example}[Skeletons of degree \(1\)]
\label{ex:degree-one}
If \(K=1\), the classification simplifies: each non-trivial orbit is
parametrised by a class \([r]=\{r,r^{-1}\}\), and fixed points have
weight \(1\). Hence skeletons of degree \(1\) are classified by the
combinatorial type \((k,f)\) and a multiset of \(k\) inversion
classes. This example shows that degree \(1\) does not imply that the
object is the unit \(\mathcal I\); indeed, there are infinitely many
non-isomorphic objects of degree \(1\) (e.g., varying the number of
orbits or the classes \([r]\)).
\end{example}

\begin{remark}[Connection with probabilistic realisation]
\label{rem:realisation-class}
The classification we have obtained is purely algebraic. When an SDS
is realised probabilistically (for instance, via a biased random walk
with resetting), the degree \(K\) and the orbit ratios \(r_O\) become
physical quantities (such as ruin probability and weights of neutral
distributions). The classification up to isomorphism of the abstract
SDS then translates into a classification of the probabilistic models
realising it. This connection is explored in Section 9, where we
relate the categorical structure to the Fisher--Rao geometry of
Paper~V.
\end{remark}

\section{The response rank}
\label{sec:rank}

In this section we prove that the number of non-trivial pairs is not
merely a bound for the response rank, but determines it exactly. The
result rests on a twisted antisymmetry identity inherited by the
response covector from the duality condition \eqref{eq:skeleton}.

Throughout this section we fix a complete SDS \(\mathcal S\) on the
skeleton \((Z,\sigma,\kappa)\), with spectral functions
\(\{f_\nu\}_{\nu=1}^N\) and amplitudes \(\{A_\nu\}\), and we write
\begin{equation}
u_z(\gamma)=\sum_{\nu=1}^N f_\nu(\gamma)\,A_\nu(z),
\qquad
s_z(\gamma)=\sum_{\nu=1}^N f_\nu(\gamma)
            \bigl[A_\nu(z)+B_\nu(z)\bigr]
          =u_z(\gamma)+\kappa(z)\,u_{\sigma z}(\gamma),
\label{eq:us-rank}
\end{equation}
the last equality being immediate from \eqref{eq:amplitudes}. We
assume throughout that \(s_z(\gamma)>0\) for every \(z\) and every
\(\gamma\in(0,1)\), so that the quotient below is well defined; this
holds in the realisations that motivate the theory, where the
\(A_\nu\) and \(f_\nu\) are positive.

\begin{definition}[Coupling functional]
\label{def:coupling}
For a probability vector \(\pi\) on \(Z\) and \(\gamma\in(0,1)\), the
\emph{coupling functional} of \(\mathcal S\) is
\begin{equation}
C(\pi,\gamma)
=\frac{\langle\pi,u(\gamma)\rangle}{\langle\pi,s(\gamma)\rangle},
\qquad
\langle\pi,x\rangle=\sum_{z\in Z}\pi_z x_z .
\label{eq:coupling}
\end{equation}
In the probabilistic realisations of the theory, \(C(\pi,\gamma)\) is
the ruin probability of the process started from the distribution
\(\pi\) with resetting parameter \(\gamma\); see \cite{PaperIII}. Here
it is simply the scalar attached to \(\pi\) by the amplitudes.
\end{definition}

\begin{definition}[Neutral distributions]
\label{def:neutral-rank}
A probability vector \(\pi\) on \(Z\) is \emph{neutral} if
\begin{equation}
\frac{\pi_z}{\pi_{\sigma z}}=r_z
\qquad\text{for every }z\in Z,
\label{eq:neutral-rank}
\end{equation}
with \(r_z\) the orbit ratio of Definition~\ref{def:ratios}. The set
of neutral distributions is denoted \(\Sigma\). Condition
\eqref{eq:neutral-rank} is vacuous at fixed points, where \(r_z=1\);
equivalently, in the form
\(\sqrt{\kappa(z)}\,\pi_z=\sqrt{\kappa(\sigma z)}\,\pi_{\sigma z}\),
it is linear and makes sense on the closed simplex. The structure of
\(\Sigma\) is studied in
Section~\ref{subsec:separatrix-weights}; here we only need that it is
non-empty, which is clear from
Proposition~\ref{prop:param}: assigning an arbitrary positive mass to
each orbit and splitting it in the ratio \(r_O:1\) produces a neutral
distribution.
\end{definition}

The next lemma is what makes the notation consistent: the invariant
\(C^\ast\) of Definition~\ref{def:Cstar}, defined purely algebraically
as a function of the degree, is exactly the value taken by the
coupling functional on neutral distributions --- for every \(\gamma\),
and for every choice of amplitudes.

\begin{lemma}[The invariant is the value of the coupling at neutrality]
\label{lem:Cstar-neutral}
For every neutral \(\pi\in\Sigma\) and every \(\gamma\in(0,1)\),
\begin{equation}
\langle\pi,s(\gamma)\rangle=(1+\sqrt K)\,\langle\pi,u(\gamma)\rangle,
\qquad\text{and hence}\qquad
C(\pi,\gamma)=\frac1{1+\sqrt K}=C^\ast .
\label{eq:Cstar-neutral}
\end{equation}
In particular \(C(\pi,\cdot)\) is constant in \(\gamma\) on \(\Sigma\),
which is the sense in which neutral distributions are insensitive to
the resetting parameter.
\end{lemma}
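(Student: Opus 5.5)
The plan is to compute \(\langle\pi,s(\gamma)\rangle\) directly from \eqref{eq:us-rank}, reindex the dual part of the sum along the involution, and use neutrality to collapse that part into a multiple of \(\langle\pi,u(\gamma)\rangle\). No property of the amplitudes or of the \(f_\nu\) is needed beyond \eqref{eq:us-rank} and the positivity assumption on \(s_z\). This is consistent with the claim that the identity holds for every \(\gamma\) and every choice of amplitudes.

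First, write
\[
\langle\pi,s(\gamma)\rangle=\sum_{z\in Z}\pi_z u_z(\gamma)+\sum_{z\in Z}\pi_z\,\kappa(z)\,u_{\sigma z}(\gamma).
\]
Since \(\sigma\) is a bijection of \(Z\), substitute \(z\mapsto\sigma z\) in the second sum. Using \(\sigma^2=\mathrm{id}\), it becomes \(\sum_{z}\pi_{\sigma z}\kappa(\sigma z)u_z(\gamma)\).

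The key step is the pointwise identity
\[
\pi_{\sigma z}\,\kappa(\sigma z)=\sqrt K\,\pi_z \qquad\text{for all } z\in Z.
\]
I will derive it from the linear form of neutrality, \(\sqrt{\kappa(z)}\,\pi_z=\sqrt{\kappa(\sigma z)}\,\pi_{\sigma z}\). This form is preferable to the ratio form because it remains valid when some \(\pi_z=0\). Multiplying both sides by \(\sqrt{\kappa(\sigma z)}\) and applying \eqref{eq:skeleton} gives
\[
\kappa(\sigma z)\pi_{\sigma z}=\sqrt{\kappa(z)\kappa(\sigma z)}\,\pi_z=\sqrt K\,\pi_z.
\]
At fixed points this reduces to \(\kappa(z)=\sqrt K\), in agreement with the Remark after Definition~\ref{def:skeleton}. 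Substituting the identity, the second sum equals \(\sqrt K\,\langle\pi,u(\gamma)\rangle\), which yields the first equation in \eqref{eq:Cstar-neutral}.

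To obtain \(C(\pi,\gamma)=C^\ast\), divide by \(\langle\pi,s(\gamma)\rangle\). The only point needing care is that this quantity is positive: \(s_z(\gamma)>0\) for all \(z\), and \(\pi\) is a probability vector, so \(\langle\pi,s(\gamma)\rangle>0\). The identity just proved then forces \(\langle\pi,u(\gamma)\rangle=\langle\pi,s(\gamma)\rangle/(1+\sqrt K)>0\), and the quotient in \eqref{eq:coupling} equals \(1/(1+\sqrt K)\), independently of \(\gamma\). Beyond handling possibly vanishing \(\pi_z\), which the linear form of neutrality absorbs, I expect no real obstacle.
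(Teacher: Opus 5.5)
Your proposal is correct and follows the paper's proof essentially step for step: the same expansion of \(\langle\pi,s(\gamma)\rangle\), the same reindexing \(z\mapsto\sigma z\), and the same pointwise identity \(\pi_{\sigma z}\,\kappa(\sigma z)=\sqrt K\,\pi_z\). The differences are minor. You derive that identity from the linear form of neutrality together with \(\kappa(z)\kappa(\sigma z)=K\), whereas the paper uses \(\pi_{\sigma z}=\pi_z/r_z\) and \(\kappa(\sigma z)=\sqrt K\,r_z\) from Proposition~\ref{prop:param}; your version also covers distributions with vanishing components, and you check explicitly that \(\langle\pi,s(\gamma)\rangle>0\) before dividing, which the paper leaves implicit.
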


\begin{proof}
By \eqref{eq:us-rank},
\[
\langle\pi,s\rangle
=\sum_{z}\pi_z u_z+\sum_{z}\pi_z\,\kappa(z)\,u_{\sigma z}.
\]
Reindexing the second sum by \(z\mapsto\sigma z\), which is a
bijection of \(Z\) since \(\sigma\) is an involution, it becomes
\(\sum_z \pi_{\sigma z}\,\kappa(\sigma z)\,u_z\). Now neutrality
\eqref{eq:neutral-rank} gives \(\pi_{\sigma z}=\pi_z/r_z\), while
Proposition~\ref{prop:param} gives \(\kappa(\sigma z)=\sqrt K\,r_z\);
hence
\[
\pi_{\sigma z}\,\kappa(\sigma z)=\sqrt K\,\pi_z
\qquad\text{for every }z\in Z,
\]
including fixed points, where \(r_z=1\) and
\(\kappa(z)=\sqrt K\). Therefore the second sum equals
\(\sqrt K\,\langle\pi,u\rangle\), and
\(\langle\pi,s\rangle=(1+\sqrt K)\langle\pi,u\rangle\). Dividing gives
\(C(\pi,\gamma)=(1+\sqrt K)^{-1}=C^\ast\), independently of
\(\gamma\).
\end{proof}

\begin{remark}
\label{rem:two-Cstars}
Lemma~\ref{lem:Cstar-neutral} is the bridge between the algebraic and
the analytic halves of this paper. In Section 2, \(C^\ast\) was
\emph{defined} as \(1/(1+\sqrt K)\), a function of the degree alone,
with no reference to distributions or amplitudes. The lemma shows
that this algebraic quantity is precisely the value of the coupling
functional at neutrality, so the two readings of \(C^\ast\) agree.
Note that the amplitudes play no role in the statement: the identity
holds for any realisation of the skeleton, which is another instance
of the principle that the invariant belongs to the skeleton and not
to its spectral realisation.
\end{remark}

We may now define the object of study of this section. Fix a neutral
distribution \(\pi^\ast\in\Sigma\) in the interior of the simplex. The
\emph{response covector} at \(\pi^\ast\) is the differential of
\(C(\cdot,\gamma)\) there: for a tangent direction \(x\) with
\(\sum_z x_z=0\),
\begin{equation}
\bigl(d_{\pi^\ast}C(\cdot,\gamma)\bigr)(x)
=\frac{\langle x,u(\gamma)\rangle\,\langle\pi^\ast,s(\gamma)\rangle
       -\langle\pi^\ast,u(\gamma)\rangle\,\langle x,s(\gamma)\rangle}
      {\langle\pi^\ast,s(\gamma)\rangle^{2}}
=\bigl\langle x,\psi(\gamma)\bigr\rangle,
\end{equation}
where, using Lemma~\ref{lem:Cstar-neutral} to replace
\(\langle\pi^\ast,u\rangle/\langle\pi^\ast,s\rangle\) by \(C^\ast\),
\begin{equation}
\psi(\gamma)=\frac{u(\gamma)-C^\ast s(\gamma)}
                  {\langle\pi^\ast,s(\gamma)\rangle}.
\label{eq:psi-rank}
\end{equation}
The \emph{response rank} is
\[
r_{\mathrm{resp}}
=\dim\operatorname{span}\{\psi(\gamma):\gamma\in(0,1)\}.
\]

\begin{remark}[The rank is unambiguous]
\label{rem:rank-welldefined}
Two vectors of \(\mathbb R^{Z}\) differing by a multiple of
\(\mathbf 1=(1,\dots,1)\) define the same linear functional on the
tangent space \(\{x:\sum_z x_z=0\}\), so one might ask whether
\(r_{\mathrm{resp}}\) should be computed in \(\mathbb R^{Z}\) or in
the quotient by \(\mathbf 1\). The two agree. Indeed, by
Lemma~\ref{lem:antisymmetry} below every \(\psi(\gamma)\) lies in the
subspace
\(W=\{v: v_{\sigma z}=-r_zv_z\}\), and \(\mathbf 1\notin W\) since
\(r_z>0\) would force \(1=-r_z\); hence \(W\cap\mathbb R\mathbf 1=0\)
and the quotient map is injective on \(W\). We therefore compute the
rank in \(\mathbb R^{Z}\) without ambiguity.
\end{remark}

\begin{lemma}[Twisted antisymmetry of the response]
\label{lem:antisymmetry}
Define
\begin{equation}
v_z(\gamma)=\sqrt K\,u_z(\gamma)-\kappa(z)\,u_{\sigma z}(\gamma).
\label{eq:v-rank}
\end{equation}
Then
\begin{equation}
\psi(\gamma)=\frac{v(\gamma)}{(1+\sqrt K)\,\langle\pi^\ast,s(\gamma)\rangle},
\label{eq:psiv-rank}
\end{equation}
and the vector \(v\) satisfies, for every \(z\in Z\),
\begin{equation}
v_{\sigma z}=-\,r_z\,v_z .
\label{eq:twisted-rank}
\end{equation}
In particular \(v_z=0\) at every fixed point of \(\sigma\).
\end{lemma}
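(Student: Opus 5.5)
We prove the lemma by direct computation, in two parts: first the identity \eqref{eq:psiv-rank} relating \(\psi\) to \(v\), then the twisted antisymmetry \eqref{eq:twisted-rank}. Throughout we use only \eqref{eq:us-rank}, the relation \(C^\ast=1/(1+\sqrt K)\), and the parametrisation of Proposition~\ref{prop:param}.

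For the first part, start from the numerator of \eqref{eq:psi-rank}. By \eqref{eq:us-rank},
\[
u_z - C^\ast s_z = u_z - \frac{u_z+\kappa(z)\,u_{\sigma z}}{1+\sqrt K}.
\]
Put everything over the common denominator \(1+\sqrt K\). The numerator becomes \((1+\sqrt K)u_z - u_z - \kappa(z)u_{\sigma z} = \sqrt K\,u_z-\kappa(z)\,u_{\sigma z}\), which is exactly \(v_z\). Dividing by \(\langle\pi^\ast,s\rangle\) then gives \eqref{eq:psiv-rank}. This step is purely algebraic and needs neither neutrality nor the positivity assumption, beyond \(\langle\pi^\ast,s\rangle\neq0\).

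For the second part, evaluate \(v\) at \(\sigma z\), using \(\sigma^2=\mathrm{id}\):
\[
v_{\sigma z}=\sqrt K\,u_{\sigma z}-\kappa(\sigma z)\,u_z .
\]
Substitute \(\kappa(\sigma z)=\sqrt K\,r_z\) and \(\kappa(z)=\sqrt K/r_z\). Both follow pointwise from Definition~\ref{def:ratios} together with \(\kappa(z)\kappa(\sigma z)=K\), namely \(\kappa(\sigma z)/\kappa(z)=r_z^2\) and \(\kappa(z)\kappa(\sigma z)=K\). In particular they hold for every site, not just orbit representatives, and they hold at fixed points with \(r_z=1\). We then compare:
\[
-r_z v_z=-r_z\sqrt K\,u_z+r_z\frac{\sqrt K}{r_z}u_{\sigma z}
=\sqrt K\,u_{\sigma z}-\sqrt K\,r_z\,u_z=v_{\sigma z}.
\]
This is \eqref{eq:twisted-rank}. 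Finally, at a fixed point we have \(\sigma z=z\) and \(r_z=1\), so the identity reads \(v_z=-v_z\), which forces \(v_z=0\).

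No step is a genuine obstacle. The one point needing care is the second part: the formulas of Proposition~\ref{prop:param} are stated for a chosen representative \(z_O\), so we must derive them pointwise, for an arbitrary site \(z\), in terms of the site's own ratio \(r_z\). This avoids any dependence on the choice of representatives, and it is what makes the argument cover fixed points uniformly.
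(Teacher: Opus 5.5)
Your proof is correct and essentially follows the paper: the first identity is the same common-denominator computation. For the twisted antisymmetry, the paper multiplies \(v_{\sigma z}\) by \(\kappa(z)\), uses \(\kappa(z)\kappa(\sigma z)=K\), and then converts via \(\sqrt K/\kappa(z)=r_z\), whereas you substitute \(\kappa(z)=\sqrt K/r_z\) and \(\kappa(\sigma z)=\sqrt K\,r_z\) directly. The two computations are equivalent, and your remark that these formulas hold at every site, not only at orbit representatives, correctly justifies the paper's appeal to Proposition~\ref{prop:param}.
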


\begin{proof}
From \eqref{eq:us-rank} and \(C^\ast=(1+\sqrt K)^{-1}\),
\[
u_z-C^\ast s_z
=(1-C^\ast)u_z-C^\ast\kappa(z)u_{\sigma z}
=\frac{\sqrt K\,u_z-\kappa(z)u_{\sigma z}}{1+\sqrt K}
=\frac{v_z}{1+\sqrt K},
\]
which gives \eqref{eq:psiv-rank} after substitution into \eqref{eq:psi-rank}. For
\eqref{eq:twisted-rank}, using \(\kappa(z)\kappa(\sigma z)=K\),
\[
\kappa(z)\,v_{\sigma z}
=\kappa(z)\Bigl[\sqrt K\,u_{\sigma z}-\kappa(\sigma z)u_z\Bigr]
=\sqrt K\,\kappa(z)u_{\sigma z}-K\,u_z
=-\sqrt K\,\bigl[\sqrt K\,u_z-\kappa(z)u_{\sigma z}\bigr]
=-\sqrt K\,v_z,
\]
and since \(\sqrt K/\kappa(z)=r_z\) by
Proposition~\ref{prop:param}, we obtain \eqref{eq:twisted-rank}. At a
fixed point, \(\sigma z=z\) and \(r_z=1\) give \(v_z=-v_z\), hence
\(v_z=0\).
\end{proof}

\begin{remark}
\label{rem:twisted-rank}
Identity \eqref{eq:twisted-rank} says that the response is determined
by a single number in each non-trivial orbit and vanishes identically
at fixed points. The space of vectors satisfying it has dimension
exactly \(k\), recovering the bound \(r_{\mathrm{resp}}\le k\) without
invoking the annihilator of the tangent to \(\Sigma\). Note also that
the minus sign makes the response covector an \emph{odd} object with
respect to the involution, twisted by the weight: this is the same
antisymmetry that in the probabilistic realisation produces the sign
change at \(a/2\).
\end{remark}

\begin{theorem}[The response rank is the number of orbits]
\label{thm:rank}
Assume, in addition to the full-rank condition of
Definition~\ref{def:sds}, that the spectral functions
\(f_1,\dots,f_N\) are linearly independent as functions on
\((0,1)\). Then
\begin{equation}
r_{\mathrm{resp}}=k .
\label{eq:rank-equality}
\end{equation}
\end{theorem}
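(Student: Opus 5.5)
The plan has two halves. Lemma~\ref{lem:antisymmetry} and Remark~\ref{rem:twisted-rank} already give \(r_{\mathrm{resp}}\le k\), since every \(\psi(\gamma)\) lies in the \(k\)-dimensional subspace \(W=\{v:v_{\sigma z}=-r_zv_z\}\). It therefore remains to show \(r_{\mathrm{resp}}\ge k\), that is, that the vectors \(\psi(\gamma)\) span all of \(W\).

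\textbf{Step 1: reduce to \(v\).} The scalar factor \((1+\sqrt K)\langle\pi^\ast,s(\gamma)\rangle\) in \eqref{eq:psiv-rank} is positive by the standing assumption \(s_z>0\). Rescaling each vector of a family by a nonzero scalar does not change its span, so
\[
\operatorname{span}\{\psi(\gamma)\}=\operatorname{span}\{v(\gamma)\}.
\]

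\textbf{Step 2: write \(v\) in a basis.} Choose representatives \(z_O\) of the \(k\) non-trivial orbits. By \eqref{eq:twisted-rank}, the linear map \(W\to\mathbb R^k\), \(v\mapsto (v_{z_O})_O\), is an isomorphism. Hence it suffices to show that the functions
\[
\gamma\mapsto v_{z_O}(\gamma),\qquad O \text{ a non-trivial orbit},
\]
are linearly independent on \((0,1)\). Indeed, if the span of \(\{(v_{z_O}(\gamma))_O\}\) were a proper subspace of \(\mathbb R^k\), some nonzero \(c\in\mathbb R^k\) would annihilate it, giving \(\sum_O c_O v_{z_O}(\gamma)\equiv0\).

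\textbf{Step 3: the key computation.} Suppose \(\sum_O c_O v_{z_O}(\gamma)=0\) for all \(\gamma\). Substituting \eqref{eq:v-rank} and \eqref{eq:us-rank} gives
\[
\sum_\nu f_\nu(\gamma)\sum_O c_O\bigl[\sqrt K\,A_\nu(z_O)-\kappa(z_O)A_\nu(\sigma z_O)\bigr]=0 .
\]
Linear independence of the \(f_\nu\) forces, for every \(\nu\),
\[
\sum_O c_O\sqrt K\,A_\nu(z_O)-\sum_O c_O\kappa(z_O)A_\nu(\sigma z_O)=0 .
\]
This is a vanishing linear combination of the vectors \(\mathbf A(z)\), \(z\in Z\), in which each site appears at most once: \(z_O\) with coefficient \(c_O\sqrt K\) and \(\sigma z_O\) with coefficient \(-c_O\kappa(z_O)\). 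Because the orbits are non-trivial, the sites \(z_O\) and \(\sigma z_O\) are all distinct. The full-rank condition of Definition~\ref{def:sds} then gives \(c_O\sqrt K=0\), hence \(c_O=0\) for every \(O\). This proves \(r_{\mathrm{resp}}\ge k\), and together with the upper bound, \(r_{\mathrm{resp}}=k\).

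The main point needing care is Step 3. There one must check that the \(2k\) sites \(z_O,\sigma z_O\) are pairwise distinct, so that no cancellation between coefficients can occur; this holds because distinct orbits are disjoint. One must also use \(K>0\) so that \(c_O\sqrt K=0\) implies \(c_O=0\). A second subtlety is that the passage from the span of functions to the span of vectors requires no regularity in \(\gamma\), since it is pure linear algebra. Finally, when \(k=0\) the statement is trivial, as \(W=0\).
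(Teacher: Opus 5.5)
Your proposal is correct and follows essentially the same route as the paper: you reduce $\psi$ to $v$, use the twisted antisymmetry to pass to the $k$ representative coordinates, and then combine linear independence of the $f_\nu$ with the full-rank condition on the $2k$ distinct sites $z_O,\sigma z_O$ to get $c_O\sqrt K=0$. The difference is only in phrasing: the paper writes $v(\gamma)=M^{\mathsf T}f(\gamma)$, shows $\operatorname{span}\{v(\gamma)\}=\operatorname{im}M^{\mathsf T}$, and proves the columns of $M$ are independent, whereas you argue dually with an annihilating vector $c$, which amounts to showing that $Mc=0$ forces $c=0$.
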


\begin{proof}
Choose a representative \(z_i\) in each of the \(k\) non-trivial
orbits. By Lemma~\ref{lem:antisymmetry}, the vector \(v(\gamma)\) is
determined by its values \(v_{z_1}(\gamma),\dots,v_{z_k}(\gamma)\),
and \(\psi(\gamma)\) is a positive multiple of \(v(\gamma)\); hence
\[
r_{\mathrm{resp}}
=\dim\operatorname{span}
 \bigl\{\bigl(v_{z_1}(\gamma),\dots,v_{z_k}(\gamma)\bigr)
        :\gamma\in(0,1)\bigr\}.
\]
From \eqref{eq:us-rank} and \eqref{eq:v-rank},
\[
v_{z_i}(\gamma)=\sum_{\nu=1}^N f_\nu(\gamma)\,M_{\nu i},
\qquad
M_{\nu i}:=\sqrt K\,A_\nu(z_i)-\kappa(z_i)\,A_\nu(\sigma z_i),
\]
so \(v(\gamma)=M^{\mathsf T}f(\gamma)\) with
\(f(\gamma)=(f_1(\gamma),\dots,f_N(\gamma))\).

If the \(f_\nu\) are linearly independent as functions, the set
\(\{f(\gamma):\gamma\in(0,1)\}\) spans \(\mathbb R^N\): otherwise
there would exist \(c\neq0\) with \(c\cdot f(\gamma)=0\) for every
\(\gamma\), i.e. \(\sum_\nu c_\nu f_\nu\equiv0\). Hence
\[
\operatorname{span}\{v(\gamma)\}
=\operatorname{im}M^{\mathsf T}
\qquad\text{and}\qquad
r_{\mathrm{resp}}=\operatorname{rank}M .
\]

It remains to show \(\operatorname{rank}M=k\). Writing
\(\mathbf A(z)=(A_1(z),\dots,A_N(z))\in\mathbb R^N\), the columns of
\(M\) are
\[
c_i=\sqrt K\,\mathbf A(z_i)-\kappa(z_i)\,\mathbf A(\sigma z_i),
\qquad i=1,\dots,k .
\]
Suppose \(\sum_i\lambda_ic_i=0\). The \(2k\) sites
\(z_1,\sigma z_1,\dots,z_k,\sigma z_k\) are pairwise distinct, and by
the full-rank condition the vectors \(\{\mathbf A(z)\}_{z\in Z}\) are
linearly independent in \(\mathbb R^N\). Equating the coefficient of
\(\mathbf A(z_i)\) to zero gives \(\lambda_i\sqrt K=0\), and since
\(K>0\), \(\lambda_i=0\) for all \(i\). The columns are therefore
independent and \(\operatorname{rank}M=k\).
\end{proof}

\begin{remark}[Consequences]
\label{rem:rank-consequences}
Theorem~\ref{thm:rank} has three separate readings.
\begin{enumerate}
\item \emph{Combinatorial characterisation.} An analytic property
  --- the dimension of the span of response covectors --- coincides
  with an integer counted from the involution. In particular
  \(r_{\mathrm{resp}}\ge2\) if and only if \(k\ge2\): the study of
  high-rank responses is the study of skeletons with many orbits, not
  a condition on the weights.
\item \emph{Categorical invariant.} By
  Theorem~\ref{thm:classification}, \(k\) is invariant under
  isomorphism; hence \(r_{\mathrm{resp}}\) is also invariant. It is
  determined by the isomorphism class, though not sufficient by
  itself to characterise it.
\item \emph{Independence of fibre.} The value of
  \(r_{\mathrm{resp}}\) does not depend on the orbit ratios
  \(r_O\), which are the coordinates of the fibre over \(K\). Moving
  \(r\) translates the separatrix without reorienting it, and the
  rank depends only on the orientation.
\end{enumerate}
\end{remark}

\begin{remark}[On the independence hypothesis]
\label{rem:hypothesis-rank}
The linear independence of the \(\{f_\nu\}\) is necessary and does not
follow from Definition~\ref{def:sds}: if some spectral functions were
dependent, the set \(\{f(\gamma)\}\) would span a proper subspace of
\(\mathbb R^N\) and the rank could drop below \(k\). In the
realisations that motivate the theory --- where
\(f_\nu(\gamma)=[1-\lambda_\nu(1-\gamma)]^{-1}\) with distinct
eigenvalues \(\lambda_\nu\) --- independence is automatic. It is
stated explicitly because it is the only hypothesis of
Theorem~\ref{thm:rank} that is not part of the definition of SDS.
\end{remark}

\section{Connection with Fisher--Rao geometry}
\label{sec:geometry}

In Paper~V of this series \cite{PaperV} it was shown that an SDS,
when realised probabilistically, induces a rigid geometry on the
distribution simplex. In particular, the set of neutral distributions
\(\Sigma\) —those for which the ruin probability is independent of
the resetting parameter— is a totally geodesic submanifold for the
Fisher--Rao metric. The aim of this section is to show that this
geometry is not an isolated fact, but a natural manifestation of the
categorical structure we have developed.

\subsection{Review of Paper~V}
\label{subsec:review}

Paper~V establishes the following results for an SDS realised by a
Markovian resetting process:

\begin{enumerate}
\item The set of neutral distributions \(\Sigma\) is given by
  distributions \(\pi\in\Delta_{m-1}\) satisfying, for each
  non-trivial orbit \(O=\{z,\sigma(z)\}\),
  \[
  \sqrt{\kappa(z)}\,\pi_z = \sqrt{\kappa(\sigma(z))}\,\pi_{\sigma(z)}.
  \]
  Equivalently,
  \[
  \frac{\pi_z}{\pi_{\sigma(z)}} = \frac{\sqrt{\kappa(\sigma(z))}}{\sqrt{\kappa(z)}} = r_z.
  \]
  This is exactly the neutrality condition of Proposition~\ref{prop:param}. Fixed points have no constraint.

\item Under the Hellinger embedding
  \[
  \Phi:\Delta_{m-1}^\circ \longrightarrow S^{m-1}_+(2),
  \qquad \Phi(\pi)=2\sqrt{\pi},
  \]
  which is an isometry from the Fisher--Rao metric to the round
  metric of the sphere, the separatrix \(\Sigma\) is identified with
  the intersection of a great subsphere with the positive orthant:
  \[
  \Phi(\Sigma^\circ) = L \cap S^{m-1}_+(2),
  \]
  where \(L\) is the linear subspace defined by
  \[
  \kappa(z)^{1/4} x_z = \kappa(\sigma(z))^{1/4} x_{\sigma(z)}
  \quad\text{for each orbit }O.
  \]
  In particular, \(\Sigma\) is a totally geodesic submanifold.
  Its dimension is \(m-k-1\). Since \(m=f+2k\), where \(f\) is the
  number of fixed points and \(k\) the number of non-trivial orbits,
  we get
  \[
  \dim \Sigma = m-k-1 = f+k-1.
  \]

\item The codimension of \(\Sigma\) in the simplex is \(k\), the number
  of non-trivial pairs of \(\sigma\).
\end{enumerate}

These results are purely geometric and depend only on the weight
structure \((Z,\sigma,\kappa)\). They do not require the choice of
amplitudes \(\{A_\nu\}\). Hence they apply to any weighted dual
skeleton, independently of its spectral realisation.

\subsection{The separatrix as realisation of the weight structure}
\label{subsec:separatrix-weights}

The characterisation of \(\Sigma\) in terms of the orbit ratios \(r_O\)
is the direct connection to our parametrisation of Section 3. Indeed,
the neutrality condition \(\pi_z/\pi_{\sigma(z)}=r_O\) can be
rewritten as:
\[
\pi|_O \propto (r_O, 1),
\]
or equivalently,
\[
\pi|_O \propto (1, r_O^{-1}),
\]
where the proportionality constant \(C_O>0\) may vary from orbit to
orbit. The choice between these forms is a matter of convention; both
are consistent with the definition of \(r_O\) in Section 3.

Thus a neutral distribution is determined by:
\begin{itemize}
\item A positive constant \(C_O\) for each non-trivial orbit \(O\)
  (which determines the total mass of the orbit).
\item A value \(\pi_{z_0}\ge 0\) for each fixed point \(z_0\) (which
  determines its mass).
\item The global normalisation \(\sum_i \pi_i=1\).
\end{itemize}

This is exactly the parameter space of dimension \(f+k-1\).
Moreover, we can prove the coset structure \(\Sigma = \pi^* \oplus H\)
directly and self-contained, using the Aitchison operation
\cite{Aitchison1982,PawlowskyEgozcue2001}.

Let \(\pi^*\in\Sigma^\circ\) be a fixed neutral distribution. For any
\(\pi\in\Sigma^\circ\), define
\[
\rho = \pi \ominus \pi^*
\]
via the Aitchison operation:
\[
\rho_z = \frac{\pi_z/\pi^*_z}{\sum_w \pi_w/\pi^*_w}.
\]
This is the inverse of \(\oplus\) (normalisation of pointwise
product). Then for each non-trivial orbit \(O=\{z,\sigma(z)\}\),
\[
\frac{\rho_z}{\rho_{\sigma(z)}}
=
\frac{\pi_z/\pi^*_z}{\pi_{\sigma(z)}/\pi^*_{\sigma(z)}}
=
\frac{\pi_z/\pi_{\sigma(z)}}{\pi^*_z/\pi^*_{\sigma(z)}}
=
\frac{r_O}{r_O}=1.
\]
Hence \(\rho_z = \rho_{\sigma(z)}\), i.e. \(\rho\in H\), where \(H\) is
the subgroup of \(\sigma\)-symmetric distributions under the Aitchison
operation.

Conversely, if \(\rho\in H\), then for each non-trivial orbit,
\[
\frac{(\pi^*\oplus\rho)_z}{(\pi^*\oplus\rho)_{\sigma(z)}}
=
\frac{\pi^*_z\rho_z}{\pi^*_{\sigma(z)}\rho_{\sigma(z)}}
=
\frac{\pi^*_z}{\pi^*_{\sigma(z)}} \cdot 1
= r_O,
\]
so \(\pi^*\oplus\rho\in\Sigma\). Therefore,
\[
\Sigma = \pi^* \oplus H.
\]
This coset structure is a universal property of the separatrix,
independent of the probabilistic realisation, and follows directly
from the neutrality condition.

\subsection{Surjective morphisms induce maps between separatrices}
\label{subsec:morphisms-separatrices}

Let \(\phi:\mathcal E_1\to\mathcal E_2\) be a morphism of skeletons.
By Remark~\ref{rem:consequences}, \(\phi\) preserves the orbit ratios
in the following sense: for every \(z\in Z_1\),
\[
r_{1,z} = r_{2,\phi(z)}.
\]
Recall from Section 3 that \(r_z=1\) for fixed points. Moreover, by
commutation with the involutions, a fixed point of \(\sigma_1\) is
necessarily sent to a fixed point of \(\sigma_2\):
\[
\sigma_2(\phi(z)) = \phi(\sigma_1(z)) = \phi(z).
\]
Thus the equality \(r_{1,z}=r_{2,\phi(z)}\) holds for every
\(z\), including fixed points.

Given a neutral distribution \(\pi\in\Sigma_1\) in the simplex of
\(\mathcal E_1\), define its \emph{pushforward}
\(\phi_\#\pi\) as the distribution on \(Z_2\) given by
\[
(\phi_\#\pi)_{z'} = \sum_{\phi(z)=z'} \pi_z.
\]

\begin{proposition}[Preservation of neutrality for surjective morphisms]
\label{prop:pushforward}
If \(\phi:\mathcal E_1\to\mathcal E_2\) is a \emph{surjective}
morphism, then
\[
\phi_\#(\Sigma_1) \subseteq \Sigma_2.
\]
That is, the image of a neutral distribution is neutral.
\end{proposition}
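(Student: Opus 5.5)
The plan is to verify the neutrality condition \eqref{eq:neutral-rank} for \(\phi_\#\pi\) directly, site by site on \(Z_2\), in its linear form \(\sqrt{\kappa_2(z')}\,(\phi_\#\pi)_{z'}=\sqrt{\kappa_2(\sigma_2 z')}\,(\phi_\#\pi)_{\sigma_2 z'}\). This form is valid on the closed simplex and avoids dividing by zero. First, \(\phi_\#\pi\) is a probability vector: its entries are nonnegative and \(\sum_{z'}(\phi_\#\pi)_{z'}=\sum_z\pi_z=1\). At fixed points of \(\sigma_2\) the condition is vacuous, so only non-trivial orbits of \(\sigma_2\) need checking.

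The key step is a fibre identity: for every \(z'\in Z_2\),
\[
\phi^{-1}(\sigma_2 z')=\sigma_1\bigl(\phi^{-1}(z')\bigr).
\]
For the inclusion \(\supseteq\): if \(\phi(z)=z'\), then \(\phi(\sigma_1 z)=\sigma_2\phi(z)=\sigma_2 z'\). For \(\subseteq\): apply the same argument to \(\sigma_2 z'\) and use that \(\sigma_1\) and \(\sigma_2\) are involutions. Since \(\sigma_1\) is a bijection of \(Z_1\), we can reindex sums over \(\phi^{-1}(\sigma_2 z')\) as sums over \(\phi^{-1}(z')\) via \(z\mapsto\sigma_1 z\).

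Next I combine neutrality of \(\pi\) with the pointwise preservation of ratios \(r_{1,z}=r_{2,\phi(z)}\) from Remark~\ref{rem:consequences}. For \(z\in\phi^{-1}(z')\), neutrality gives \(\pi_z=r_{1,z}\,\pi_{\sigma_1 z}=r_{2,z'}\,\pi_{\sigma_1 z}\). Summing over the fibre and using the fibre identity yields
\[
(\phi_\#\pi)_{z'}=r_{2,z'}\sum_{z\in\phi^{-1}(z')}\pi_{\sigma_1 z}=r_{2,z'}\,(\phi_\#\pi)_{\sigma_2 z'}.
\]
This is the neutrality relation on \(Z_2\), or equivalently its \(\sqrt{\kappa_2}\)-weighted linear form. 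In the linear form I would instead use weight preservation directly: \(\sqrt{\kappa_1(z)}=\sqrt{\kappa_2(z')}\) and \(\sqrt{\kappa_1(\sigma_1 z)}=\sqrt{\kappa_2(\sigma_2 z')}\).

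Surjectivity enters only in the ratio form, and it is needed to keep the image in the interior. If \(\pi\) has positive entries, every fibre \(\phi^{-1}(z')\) is non-empty, so \((\phi_\#\pi)_{z'}>0\) and the ratio \(\pi_{z'}/\pi_{\sigma z'}\) is defined. Without surjectivity, a whole orbit of \(Z_2\) could receive zero mass. The linear form would still hold there trivially, but the quotient form would not be defined. The only point needing care is the fibre identity; the rest is bookkeeping.
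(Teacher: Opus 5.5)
Your proof is correct and follows the paper's argument. You reindex the fibre over \(\sigma_2 z'\) as the \(\sigma_1\)-image of the fibre over \(z'\), then sum \(\pi_z = r_{2,z'}\,\pi_{\sigma_1 z}\) over that fibre using pointwise preservation of ratios; beyond the paper, you prove both inclusions of \(\phi^{-1}(\sigma_2 z')=\sigma_1(\phi^{-1}(z'))\) (the paper states only one direction) and give the closed-simplex linear form that the paper defers to its subsequent remark.
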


\begin{proof}
Let \(\pi\in\Sigma_1\). For a site \(z'\in Z_2\), its mass in
\(\phi_\#\pi\) is the sum of the masses of its preimages. We must
verify the neutrality condition for each non-trivial orbit
\(O_2\) of \(\sigma_2\).

Take a representative \(z'_0\in O_2\). Then
\[
(\phi_\#\pi)_{z'_0} = \sum_{\phi(z)=z'_0} \pi_z,
\qquad
(\phi_\#\pi)_{\sigma_2(z'_0)} = \sum_{\phi(z)=\sigma_2(z'_0)} \pi_z.
\]
Since \(\phi\) commutes with the involutions, if \(\phi(z)=z'_0\)
then \(\phi(\sigma_1(z))=\sigma_2(z'_0)\). Hence the sum over the
preimages of \(\sigma_2(z'_0)\) is the sum of the weights of the
images under \(\sigma_1\) of the preimages of \(z'_0\).
Using neutrality of \(\pi\) in each orbit of \(\sigma_1\),
\[
\pi_{\sigma_1(z)} = \frac{\pi_z}{r_{1,z}}.
\]
But \(r_{1,z}=r_{2,z'_0}\) by preservation of ratios. Summing over
all preimages:
\[
(\phi_\#\pi)_{\sigma_2(z'_0)}
= \sum_{\phi(z)=z'_0} \pi_{\sigma_1(z)}
= \sum_{\phi(z)=z'_0} \frac{\pi_z}{r_{2,z'_0}}
= \frac{1}{r_{2,z'_0}} (\phi_\#\pi)_{z'_0}.
\]
Thus
\[
\frac{(\phi_\#\pi)_{z'_0}}{(\phi_\#\pi)_{\sigma_2(z'_0)}} = r_{2,z'_0},
\]
which is the neutrality condition in \(\mathcal E_2\). Hence
\(\phi_\#\pi\in\Sigma_2\).
\end{proof}

\begin{remark}
Surjectivity is not necessary for preserving neutrality as such; it is
only needed to ensure that the pushforward of an interior
distribution remains interior (i.e., has no zero components).
Therefore, within the open simplex we restrict the proposition to
surjective morphisms.

If one works with the closed separatrix, allowing zero components,
the preservation of neutrality extends to arbitrary morphisms. In that
case, the neutrality condition is formulated in its linear form:
\[
\sqrt{\kappa(z)}\,\pi_z = \sqrt{\kappa(\sigma(z))}\,\pi_{\sigma(z)},
\]
and the proof is analogous, without dividing by \(\pi_{\sigma(z)}\).
This avoids any problems with quotients when both masses are zero.
\end{remark}

\begin{remark}
The pushforward \(\phi_\#\) is linear as a map between spaces of
probability measures, but it is not a homomorphism for the Aitchison
operation \(\oplus\). Therefore we do not claim that \(\phi_\#\)
respects the coset structure \(\Sigma = \pi^*\oplus H\) in the group
sense. Compatibility of the pushforward with the coset structure, if
desired, would require a specific proof not addressed in this section.
\end{remark}

\subsection{The Cartesian product and tensor product of distributions}
\label{subsec:product-tensor}

Given two skeletons \(\mathcal E_1=(Z_1,\sigma_1,\kappa_1)\) and
\(\mathcal E_2=(Z_2,\sigma_2,\kappa_2)\), their Cartesian product
\(\mathcal E_1\times\mathcal E_2\) has site set \(Z_1\times Z_2\),
involution \(\sigma_1\times\sigma_2\), and orbit ratios (by
Proposition~\ref{prop:product})
\[
r_{(z,w)} = r_{1,z} \cdot r_{2,w}.
\]

Define the \emph{tensor product} of distributions
\(\pi_1\in\Delta(Z_1)\) and \(\pi_2\in\Delta(Z_2)\) as the
distribution on \(Z_1\times Z_2\) given by
\[
(\pi_1 \otimes \pi_2)_{(z,w)} = \pi_{1,z} \cdot \pi_{2,w}.
\]
This is the product distribution (in the sense of probability
measure) on the Cartesian product of the site sets.

\begin{proposition}[Tensor product preserves neutrality]
\label{prop:tensor-neutrality}
If \(\pi_1\in\Sigma_1\) and \(\pi_2\in\Sigma_2\), then
\[
\pi_1 \otimes \pi_2 \in \Sigma_{12},
\]
where \(\Sigma_{12}\) is the separatrix of the Cartesian product
\(\mathcal E_1\times\mathcal E_2\).
\end{proposition}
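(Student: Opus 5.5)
The plan is to verify the neutrality condition of Definition~\ref{def:neutral-rank} for \(\pi_1\otimes\pi_2\) on the product skeleton. We use its linear form
\(\sqrt{\kappa(z)}\,\pi_z=\sqrt{\kappa(\sigma z)}\,\pi_{\sigma z}\), so that no division by possibly vanishing masses is needed. Two preliminary facts are needed first. The product distribution is a probability vector on \(Z_1\times Z_2\), since its entries are nonnegative and \(\sum_{(z,w)}\pi_{1,z}\pi_{2,w}=1\). The weight of the product is \(\kappa_1\otimes\kappa_2\) and its involution is \(\sigma_1\times\sigma_2\) (Definition~\ref{def:product}). By Proposition~\ref{prop:product}, this is a skeleton of degree \(K_1K_2\), so \(\Sigma_{12}\) is well defined.

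The key step is a one-line factorisation. For every \((z,w)\in Z_1\times Z_2\) we have
\[
\sqrt{(\kappa_1\otimes\kappa_2)(z,w)}\,(\pi_1\otimes\pi_2)_{(z,w)}
=\bigl(\sqrt{\kappa_1(z)}\,\pi_{1,z}\bigr)\bigl(\sqrt{\kappa_2(w)}\,\pi_{2,w}\bigr).
\]
Neutrality of \(\pi_1\) lets us replace the first factor by \(\sqrt{\kappa_1(\sigma_1 z)}\,\pi_{1,\sigma_1 z}\), and neutrality of \(\pi_2\) does the same for the second factor. Recombining the two factors yields exactly
\[
\sqrt{(\kappa_1\otimes\kappa_2)(\sigma_1z,\sigma_2w)}\,(\pi_1\otimes\pi_2)_{(\sigma_1z,\sigma_2w)},
\]
which is the neutrality condition at \((z,w)\) for the product. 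This holds uniformly, including at sites where one or both factors are fixed points, since there the corresponding equation is trivially true.

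On the open simplex one can equivalently phrase the argument with ratios. Dividing gives \(\pi_{1,z}\pi_{2,w}/(\pi_{1,\sigma_1z}\pi_{2,\sigma_2w})=r_{1,z}r_{2,w}=r_{(z,w)}\) by Proposition~\ref{prop:product}(2). I would mention this as a remark connecting the result to the multiplicativity of orbit ratios. I would also note the earlier subtlety (Remark~\ref{rem:param-interp}) that \(O_1\times O_2\) splits into two orbits with ratios \(r_1r_2\) and \(r_1/r_2\). The pointwise argument handles both orbits at once, so no orbit-by-orbit case split is required.

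The only real obstacle is bookkeeping rather than mathematics. One must keep the pointwise formulation, not orbit coordinates \(r_O\) with chosen representatives, to avoid inversion ambiguities. One must also make clear that fixed points of the product (pairs of fixed points) impose no condition. Using the linear square-root form sidesteps both issues, so I expect the proof to be a few lines.
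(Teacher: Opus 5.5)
Your argument is correct and is essentially the paper's: the paper does the same pointwise factorisation at each site \((z,w)\), but in ratio form, writing \(\pi_{1,z}\pi_{2,w}/(\pi_{1,\sigma_1 z}\pi_{2,\sigma_2 w})=r_{1,z}r_{2,w}=r_{(z,w)}\) via Proposition~\ref{prop:product}(2). Your linear form \(\sqrt{\kappa(z)}\,\pi_z=\sqrt{\kappa(\sigma z)}\,\pi_{\sigma z}\) is a small improvement because it never divides by masses, so it also covers neutral distributions with zero components, a case the paper's ratio computation tacitly excludes. For example, zero mass at a fixed point \(z_0\) of \(\sigma_1\) makes both entries on the product orbit \(\{(z_0,w),(z_0,\sigma_2 w)\}\) vanish, and the ratio there is \(0/0\).
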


\begin{proof}
For every \((z,w)\in Z_1\times Z_2\),
\[
\frac{(\pi_1\otimes\pi_2)_{(z,w)}}
{(\pi_1\otimes\pi_2)_{(\sigma_1(z),\sigma_2(w))}}
=
\frac{\pi_{1,z}}{\pi_{1,\sigma_1(z)}}
\cdot
\frac{\pi_{2,w}}{\pi_{2,\sigma_2(w)}}
=
r_{1,z} \cdot r_{2,w}
=
r_{(z,w)}.
\]
Hence \(\pi_1\otimes\pi_2\) satisfies the neutrality condition for
\(\mathcal E_1\times\mathcal E_2\).
\end{proof}

\begin{remark}
The map
\[
\Sigma_1 \times \Sigma_2 \longrightarrow \Sigma_{12},
\qquad
(\pi_1,\pi_2) \longmapsto \pi_1\otimes\pi_2
\]
is a natural map induced by the Cartesian product of skeletons.
However, in general it is not surjective: a neutral distribution on
\(Z_1\times Z_2\) may have correlations between the coordinates
\(z\) and \(w\), and may not factorise as a tensor product. The image
of this map is the subset of neutral distributions that are
\emph{independent} in the sense that the coordinates \(z\) and \(w\)
are independent under the probability measure.

This observation is analogous to the structure of orbits of the
Cartesian product seen in Remark~\ref{rem:param-interp}: the product
of two non-trivial orbits splits into two orbits, and neutral
distributions can assign independent masses to each of them, giving
rise to correlations.
\end{remark}

\begin{remark}[Aitchison and tensor product]
\label{rem:aitchison-tensor}
The Aitchison operation \(\oplus\) \cite{Aitchison1982,PawlowskyEgozcue2001}, defined by
\[
(\pi\oplus\rho)_z = \frac{\pi_z \rho_z}{\sum_w \pi_w \rho_w},
\]
is an operation requiring \(\pi\) and \(\rho\) to be defined on the
same set of coordinates. The tensor product \(\otimes\) is a
different operation, combining distributions defined on different
coordinate sets. Both operations are important, but should not be
confused. The first gives a group structure to the open simplex; the
second is the natural operation associated with the Cartesian product
of skeletons.
\end{remark}

\subsection{The response rank as a categorical invariant}
\label{subsec:rank-invariant}

In Section 8 we proved that the response rank \(r_{\mathrm{resp}}\) is
exactly equal to the number of non-trivial pairs \(k\):
\[
r_{\mathrm{resp}} = k.
\]
By the classification theorem (Section 7), \(k\) is an invariant of
the isomorphism class of a skeleton (together with \(f\), \(K\), and
the multiset of classes \([r]\)). Hence \(r_{\mathrm{resp}}\) is
determined by the isomorphism class: if two skeletons are isomorphic,
they have the same \(r_{\mathrm{resp}}\). Conversely, equality of
\(r_{\mathrm{resp}}\) is not sufficient to characterise the
isomorphism; all the invariants of the classification are needed.

This equality reinforces the idea that the Fisher--Rao geometry and
the response of the system are manifestations of the underlying
algebraic structure. The rank is not a number depending on the
continuous parameters \(r_O\) or on the spectral realisation; it is
determined exclusively by the combinatorics of the involution
\(\sigma\), which is one of the categorical invariants of the SDS.

\subsection{Conclusion: geometry as a natural realisation of the algebraic structure}
\label{subsec:conclusion-geometry}

We have seen that:
\begin{itemize}
\item The separatrix \(\Sigma\) is the set of distributions satisfying
  the orbit ratios \(r_O\), which are exactly the weight parameters
  of a skeleton.
\item Surjective morphisms induce maps between separatrices that
  preserve neutrality.
\item The Cartesian product of SDS induces a natural map
  \(\Sigma_1\times\Sigma_2\to\Sigma_{12}\) via the tensor product of
  distributions, which preserves neutrality.
\item The response rank \(r_{\mathrm{resp}}\) is a categorical
  invariant, determined by the isomorphism class.
\end{itemize}

Thus the Fisher--Rao geometry of Paper~V provides a \emph{natural
geometric realisation} of part of the underlying algebraic structure.
We do not claim that the geometry is a faithful representation of the
entire category \(\SDS\), but rather that certain structural aspects
—the orbit ratios, the separatrix, neutrality, and the induced
operations— are coherently reflected in the distribution simplex.
This connection suggests that the geometric rigidity observed in
probabilistic resetting models is not merely an accidental feature of
those models, but reflects structural relations already present at
the abstract level of the SDS.

\section{Future work and open questions}
\label{sec:future}

The theory developed in this paper establishes the foundations of an
algebraic and categorical structure for SDS. However, as is often the
case with foundational works, the answers obtained raise as many
questions as they close. In this section we point out the directions
we consider most promising and that lie outside the scope of this
paper.

\subsection{The Grothendieck group of the monoidal category}
\label{subsec:grothendieck-future}

In Section 6 we proved that the degree \(\deg\) induces a surjective
homomorphism from the Grothendieck group \(K_0(\SDS)\) onto
\(\mathbb R_{>0}\), but we could not establish injectivity. Indeed,
the kernel contains additional information about the number of sites,
orbit structure, and coordinates \(r_O\). A complete description of
\(K_0(\SDS)\) is a natural open problem.

One possible approach is to construct additional additive invariants
that distinguish non-trivial classes in the kernel. For example, the
logarithm of the number of sites,
\[
N(\mathcal E) = \log |Z|,
\]
is additive under the Cartesian product:
\[
N(\mathcal E_1\times\mathcal E_2) = N(\mathcal E_1) + N(\mathcal E_2).
\]
Since isomorphisms preserve the cardinality of the site set,
\(N\) is constant on isomorphism classes. The remaining question is
whether this invariant descends to a useful homomorphism on the
Grothendieck group, namely whether it is compatible with the
relations imposed in the group completion and whether it provides
information independent of the degree.

Other natural invariants arise from the orbit structure and from the
geometry of the associated separatrix. These need not be additive in
their raw form, but suitable combinations or transforms of them may
provide further information on the kernel of the degree map. The
classification of additive invariants of the monoidal category is
therefore an open and interesting problem.

\subsection{Projective morphisms and fibred categories}
\label{subsec:projective}

In this paper we have worked with morphisms that exactly preserve
weights: \(\kappa_1(z)=\kappa_2(\phi(z))\). A natural generalisation
is to allow a scale factor \(\lambda>0\):
\[
\kappa_1(z) = \lambda\, \kappa_2(\phi(z)).
\]
In that case, the degree transforms as
\[
K_1 = \lambda^2 K_2,
\]
while the orbit ratios are preserved:
\[
r_{1,z}=r_{2,\phi(z)}.
\]
These \emph{projective morphisms} would connect objects of different
strata \(\SDS_K\) and give rise to a fibred category over
\(\mathbb R_{>0}\). The fibre over each \(K\) would be the
subcategory \(\SDS_K\) we have studied, and projective morphisms
would provide changes of fibre. This structure might have a natural
geometric interpretation in terms of the Fisher--Rao metric, where the
scale factor \(\lambda\) would correspond to a homothety in the
Hellinger embedding. The study of this fibred category and its
functorial properties is a natural continuation.

\subsection{Closure of the realisable class and realisation theory}
\label{subsec:realisable}

One of the original goals of this line of research was to understand
the SDS arising from Markovian resetting processes, in particular the
biased random walk with geometric resetting. Paper~V showed that
these SDS induce a rigid geometry on the simplex, but did not
characterise which SDS are realisable by some process.

Now that we have defined the operations of Cartesian product and
disjoint union, we can ask: is the subclass of realisable SDS closed
under these operations? In other words, if \(\mathcal S_1\) and
\(\mathcal S_2\) are realisable by resetting processes, is
\(\mathcal S_1\times\mathcal S_2\) (or \(\mathcal S_1\sqcup\mathcal S_2\))
also realisable? The answer would require a realisation theory for
SDS, which is an open problem. A first step would be to study the
biased random walk on products of intervals, where the Cartesian
product of SDS corresponds to a random walk on the product graph with
geometric resetting. An affirmative answer in this case would provide
a powerful family of examples.

\subsection{Representation theory and degree character}
\label{subsec:representations}

The graded monoidal category \(\SDS\) suggests a theory of
representations in which the degree \(K\) acts as a character.
Specifically, a monoidal functor
\[
\rho:\SDS \longrightarrow \mathbf{Vect}
\]
to the category of graded vector spaces could encode the invariant
\(K\) as the degree of the representation space. In this context,
\(C^\ast(K)=1/(1+\sqrt K)\) would be a scalar function of the
character, analogous to the dimension of a representation in group
theory. This line of work connects directly with the spectral theory
of Papers~I--IV, where the amplitudes \(\{A_\nu\}\) provide a linear
representation of the skeleton in the space of modes. A systematic
development of this connection promises to unify the two sides of the
theory: the algebraic (monoidal category) and the analytic (spectrum
of operators).

\subsection{Limits, colimits, and topos structure}
\label{subsec:limits}

We have shown that each stratum \(\SDS_K\) has finite coproducts
(given by the disjoint union). A natural question is whether other
limits or colimits exist: pullbacks, equalisers, coequalisers, or
limits over more general diagrams. The classification of objects up
to isomorphism (Section 7) suggests that the category might have a
very rich structure, possibly that of a topos or an algebraic
variety. Characterising injective, projective, and generating objects
would be an important step in this direction.

\subsection{Connections with information theory and physics}
\label{subsec:information}

Finally, we point out a more speculative but promising direction. The
Fisher--Rao geometry of Paper~V and the Hellinger embedding that
realises it are the same objects that appear in quantum information
theory, where Uhlmann fidelity and the Bures metric provide an
analogous structure. In that context, the invariant \(C^\ast\) could
be interpreted as a quantum fidelity between states, and the
separatrix \(\Sigma\) as a submanifold of states indistinguishable
under certain operations. This connection, though still speculative,
suggests that the SDS might have a realisation within quantum
information theory, where the degree \(K\) would be a decoherence or
mixing parameter. Exploring this possibility is one of the most
ambitious directions opened by this work.

\subsection{Conclusion of the section}

Paper~VI has established a solid foundation for the algebraic and
categorical study of SDS. The open questions we have enumerated —the
Grothendieck group, projective morphisms, realizability,
representation theory, categorical limits, and connections with
physics— delineate a broad and coherent research programme. Each of
them can be addressed separately, and all rest on the architecture we
have built. We hope that this work serves as a starting point for
future explorations of this mathematical structure, which emerged
from a concrete probabilistic problem but, as we have seen, has a
much broader life of its own.

\section*{Acknowledgements}

The author thanks Tony Newton for stimulating discussions, insightful
comments, and valuable contributions throughout the development of
this research program.


\end{document}